\newcommand{\delbar}{\bar{\partial}}
\DeclareMathOperator{\area}{area}
\DeclareMathOperator{\ind}{index}
\newtheorem {theorem} {Theorem} [section]
\newtheorem{lemma}[theorem] {Lemma}
\newtheorem {definition} [theorem] {Definition} 
\newtheorem {proposition}  [theorem]{Proposition} 
\newtheorem {corollary}[theorem]  {Corollary} 
\newtheorem {example} [theorem]  {Example}
\newtheorem {remark} [theorem] {Remark}
\numberwithin {equation} {section}
\begin{document} 
\author {Yasha Savelyev} 
\title {Morse theory for the Hofer length functional} 

\address{ICMAT Madrid}
 \email {yasha.savelyev@gmail.com}  
\begin{abstract}  Following
   \cite{citeSavelyevVirtualMorsetheoryon$Omega$Ham$(Momega)$.}, we develop here a connection
between Morse theory for the (positive) Hofer length functional $L: \Omega \text
{Ham}(M, \omega) \to \mathbb{R}$, with Gromov-Witten/Floer theory, for  monotone
symplectic manifolds $ (M, \omega) $. This gives some immediate  restrictions on the topology of
the group of Hamiltonian symplectomorphisms (possibly relative to the Hofer
length functional), and a criterion for non-existence of certain higher index
geodesics for the Hofer length functional.  The argument is based 
on a certain automatic transversality
phenomenon which uses Hofer geometry to conclude transversality and may be
useful in other contexts.  Strangely the monotone assumption seems essential for
this argument, as abstract perturbations necessary for the virtual moduli
cycle, decouple us from underlying Hofer geometry, causing automatic transversality
to break.
 \end {abstract}
\maketitle 
\section{Introduction} 
Topology of the group of Hamiltonian diffeomorphisms $ \text {Ham}(M, \omega) $
of a symplectic manifold $ (M, \omega) $, is a very rich object of study
connected to all techniques of modern symplectic topology. The first major
investigations already appear in Gromov's
\cite{citeGromovPseudoholomorphiccurvesinsymplecticmanifolds.} for some four
dimensional symplectic manifolds using Gromov-Witten theory. Much of the subsequent study of
the subject was based on this. In the higher dimensional setting rather little
is known outside special cases with high symmetry, e.g.
\cite{citeReznikovCharacteristicclassesinsymplectictopology}.
The problem in general is that it is very hard to even construct good candidates for ``cycles''
in $ \text {Ham}(M, \omega) $, which may be non-trivial. For $\pi_1 $
there is one very natural candidate: Hamiltonian circle actions. It turns out
\cite{citeMcDuffTolmanTopologicalpropertiesofHamiltoniancircleactions} that if the circle action is in appropriate sense semi-free, it always
represents a non-trivial class in $ \pi_1 $. There are some
generalizations of this to certain geodesics of the Hofer length
functional  for example
\cite{citeMcDuffSlimowitzHofer--ZehndercapacityandlengthminimizingHamiltonianpaths}. In these cases one
crucial necessary condition on such a geodesic is that it must be index 0.  
From this point of view it is in a sense clear how to try generalize the
above: consider more general higher index geodesics for the (positive) Hofer
length functional, and their unstable manifolds, i.e. try to do some kind of  Morse
theory. One immediate problem is that  in general we can only make sense of 
``unstable manifolds'' locally, (see however
\cite{citeSavelyevVirtualMorsetheoryon$Omega$Ham$(Momega)$.} for examples of
when global unstable manifolds do exist) and so must work with relative classes. Also
to guarantee local smoothness of the Hofer length functional,  we must restrict the class 
of geodesics to what we call ``Ustilovsky geodesics'', which first appear in
\cite{citeUstilovskyConjugatepointsongeodesicsofHofersmetric}, 
and whose theory is further developed in
\cite{citeLalondeMcDuffHofers$Linfty$--geometry:energyandstabilityofHamiltonianflowsIII} in more
generality. Nevertheless the local Morse theory can be set up. This gives us
candidates for ``cycles'', when are they non-trivial? We show that this always 
happens under certain Floer theoretic assumptions on the Ustilovsky
geodesic,  which leads us to
the notion of ``robust Ustilovsky geodesic'', (technically we still have to
perturb the geodesic). This is a strange phenomenon. The robust
condition is some global Floer theoretic condition, but is local from the 
point of view of $ \text {Ham}(M, \omega) $, 
yet it is enough to deduce the global fact that 
the above cycles are non-trivial.  
 
The main argument is based on a certain  unusual automatic transversality
phenomenon, which actually uses Hofer geometry to conclude transversality. 
This already appeared in
\cite{citeSavelyevVirtualMorsetheoryon$Omega$Ham$(Momega)$.} in less generality, but was somewhat
obscured and had some inaccuracies.
\subsection {The group of Hamiltonian symplectomorphisms and Hofer metric}
\label{sec.hofer} Given a smooth function $H: M ^{2n} \times (S ^{1} =
\mathbb{R}/ \mathbb{Z}) \to \mathbb{R}$,  there is  an associated time dependent
Hamiltonian vector field $X _{t}$, $0 \leq t \leq 1$,  defined by 
\begin{equation}  \label{equation.ham.flow} \omega (X _{t}, \cdot)=-dH _{t} (
\cdot).
\end{equation}
The vector field $X _{t}$ generates a path $\gamma: [0,1] \to \text
{Diff}(M)$, starting at $id$. Given such a path $\gamma$, its end point
$\gamma(1)$ is called a Hamiltonian symplectomorphism. The space of Hamiltonian symplectomorphisms
forms a group, denoted by $\text {Ham}(M, \omega)$.
In particular the path $\gamma$ above lies in $ \text {Ham}(M, \omega)$. It
is well-known 
that any smooth path $\gamma$ in $\text {Ham}(M, \omega)$ with
$\gamma (0)=id$ arises in this way (is generated by $H: M \times [0,1] \to
\mathbb{R}$ as above). Given  a general smooth path $\gamma$, the \emph{Hofer
length}, $L (\gamma)$ is defined by 
\begin{equation*} L (\gamma):= \int_0 ^{1} \max _{M} H _{t} ^{\gamma} 
-\min _{M} H ^{\gamma} _{t} dt,
\end{equation*}
where $H ^{\gamma}$ is a generating function for the path
$t \mapsto \gamma({0}) ^{-1} \gamma (t),$ $0\leq t \leq 1$. 
The Hofer distance $\rho (\phi, \psi)$ is  defined by taking the
infinum of the Hofer length of paths from $\phi $ to $\psi$. We only mention
it, to emphasize that it is a deep and interesting theorem that the resulting
metric is non-degenerate, (cf.
\cite{citeHoferOnthetopologicalpropertiesofsymplecticmaps,
citeLalondeMcDuffThegeometryofsymplecticenergy}). This gives $ \text {Ham}(M, \omega)$ the structure of a Finsler
manifold. 
A related functional, $L ^{+} $, that more readily connects to Gromov-Witten
theory is given by
 \begin{equation*} L ^{+} (\gamma)   := \int_0 ^{1} \max _{M} H _{t} ^{\gamma}
 dt,
\end{equation*}
for $H _{t} ^{\gamma} $ the generating function normalized by $\int _{M} H _{t}
^{\gamma} \omega ^{n} = 0$, for every $t $.

We now consider $L ^{+}$ as a functional on the space of paths in $ \text
{Ham}(M, \omega)$ starting at $id$ and ending at some fixed end point $\phi$, denote
this by $ \mathcal {P} _{\phi}$.
It is shown by Ustilovsky
\cite{citeUstilovskyConjugatepointsongeodesicsofHofersmetric} that $ \gamma$ is a smooth critical
point of $$L ^{+}:  \mathcal {P} _{\phi} \to
\mathbb{R},$$ if there is a unique point $x _{\max} \in M$
maximizing the generating function $H ^{\gamma} _{t}$
at each moment $t$, and such that $H ^{\gamma} _{t}$ is Morse at $x _{\max}$, at
each moment $t $.
 \begin{definition} We call such a $\gamma$ an \emph {
\textbf{Ustilovsky geodesic}}.
\end{definition}
\begin{definition} Given  a chain complex $(A _{\bullet},d)$ with some                                 
 distinguished basis, and  the inner product 
determined by this basis, (with respect to which it is orthonormal), we 
say that a chain $c$ is \emph {\textbf{semi homologically essential}} if 
$c $ is orthonormal to $ d (e) $ for any $e$. 
\end{definition}
This is of course automatic if $A _{\bullet}$ is perfect, which is often the
case in Floer theoretic applications we consider.  
 Although we don't need this, it is relatively easy to see that
if $c $ is semi-homologically essential in a chain complex of vector spaces 
$(A _{\bullet}, d) $, and is closed ($dc=0 $)
then it is homologically essential, meaning there is no quasi-isomorphic 
sub-complex of $A
_{\bullet} $ which is orthogonal to $c $. If $c$ is homologically essential then
again a bit of elementary algebra implies that it is semi-homologically
essential. (We need field coefficients for this.) 
\begin{definition}  We will say that an Ustilovsky geodesic $\gamma \in \mathcal
{P} _{\phi} $ is \emph{\textbf{robust}}, if $\widetilde{\phi}$ is Floer
non-degenerate and the constant, period one orbit $o _{\max}$ at $x
 _{\max} $ for the flow $\gamma $ is  semi-homologically essential in $CF
 (\gamma) $. Here $ \widetilde{\phi} \in \widetilde{ \text {Ham}} (M, \omega)$
 is the lift of $ \phi $ to the universal cover determined by $\gamma $.
\end{definition} 
\begin{theorem} \label{thm.indexUst}
   \cite{citeSavelyevProofoftheindexconjectureinHofergeometry}
Let $\gamma \in \mathcal {P} _{\phi}$
be an Ustilovsky geodesic, then  the Morse index of $ \gamma$ with respect to
 $L _{+}$ is
\begin{equation} \label{eq.difference} 
|CZ (o _{\max}) - CZ ( [M])|, 
\end{equation}
where $CZ $ is the Conley-Zehnder index and $CZ ( [M]) $ denotes the
Conley-Zehnder degree of the fundamental class, under the PSS isomorphism. 
\end{theorem}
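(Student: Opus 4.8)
The plan is to reduce the Hessian of $L ^{+}$ at the Ustilovsky geodesic $\gamma$ to a finite Morse index form attached to the linearized Hamiltonian flow along the constant orbit $o _{\max}$, to compute the Morse index of that form by a Morse index theorem for linear Hamiltonian systems, and then to match the resulting symplectic invariant with the right hand side of \eqref{eq.difference}.

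First I would localize. Since $x _{\max}$ is a nondegenerate maximum of $H ^{\gamma} _{t}$ for each $t$, and, being a critical point of each $H ^{\gamma} _{t}$, is a fixed point of $\phi$, every path $\gamma'$ sufficiently $C ^{1}$-close to $\gamma$ in $\mathcal {P} _{\phi}$ has generating function $H ^{\gamma'} _{t}$ with a unique nondegenerate maximum $x ^{\gamma'} _{\max}(t)$ depending smoothly on $\gamma'$, so that $L ^{+}(\gamma') = \int _{0} ^{1} H ^{\gamma'} _{t}\bigl(x ^{\gamma'} _{\max}(t)\bigr)\, dt$. By the envelope theorem the first variation of $L ^{+}$ at $\gamma$ only sees the variation of $H _{t}$ \emph{at} $x _{\max}$; differentiating a second time and eliminating the motion of the maximum by means of the invertible fibrewise Hessian $d ^{2} _{M} H ^{\gamma} _{t}(x _{\max})$, one finds that $\operatorname{Hess} _{\gamma} L ^{+}$ on $T _{\gamma}\mathcal {P} _{\phi}$ depends only on the $2$-jet of $\{H ^{\gamma} _{t}\}$ along $o _{\max}$. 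Choosing a smooth family of Darboux charts along $o _{\max}$ we may therefore assume that $M$ is a neighbourhood of $0$ in $(\mathbb{R} ^{2n}, \omega _{0})$, that $x _{\max} = 0$, and that $H ^{\gamma} _{t}(v) = -\tfrac{1}{2}\langle A _{t} v, v\rangle + O(|v| ^{3})$ with $A _{t}$ positive definite (the critical point is a genuine maximum at each moment $t$). Let $\Psi _{t}\in Sp(2n,\mathbb{R})$ be the linearized flow in these coordinates, so that $\Psi _{1} = d\phi _{x _{\max}}$; this matrix has no eigenvalue $1$ because $\widetilde{\phi}$, hence $\phi$, is Floer nondegenerate.

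Next I would invoke Ustilovsky's second variation formula, in the form developed in \cite{citeUstilovskyConjugatepointsongeodesicsofHofersmetric, citeLalondeMcDuffHofers$Linfty$--geometry:energyandstabilityofHamiltonianflowsIII}. Together with the reduction above it presents $\operatorname{Hess} _{\gamma} L ^{+}$ on $T _{\gamma}\mathcal {P} _{\phi}$ as the index form of the linear Hamiltonian system with Hamiltonian $\tfrac{1}{2}\langle A _{t} v, v\rangle$, subject to the Dirichlet-type constraint that the infinitesimal variation integrates, through $\Psi _{t}$, to a deformation fixing both endpoints $\{id\}$ and $\{\phi\}$. This index form has finite Morse index --- a finiteness that is part of why Ustilovsky geodesics are the natural class here, see \cite{citeUstilovskyConjugatepointsongeodesicsofHofersmetric, citeLalondeMcDuffHofers$Linfty$--geometry:energyandstabilityofHamiltonianflowsIII} --- and by the Morse index theorem for linear Hamiltonian systems with pinned, nondegenerate endpoints (a Duistermaat-type theorem: the Morse index of the index form equals a relative Maslov/Conley--Zehnder index of the path $\Psi _{t}$, with no boundary defect), that index equals $\mu _{CZ}(\Psi)$ measured relative to a fixed reference path $\Psi ^{0}$. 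Crucially, all crossings contributing to this index occur with the same sign because $A _{t}$ is positive definite for every $t$, so $\ind _{L ^{+}}(\gamma) = \bigl|\mu _{CZ}(\Psi) - \mu _{CZ}(\Psi ^{0})\bigr|$.

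It remains to identify the two ends of this difference. Because $o _{\max}$ is the \emph{constant} orbit at a fixed point, the trivialization of $TM$ along $o _{\max}$ underlying $CZ(o _{\max})$ is canonically the constant one, so $CZ(o _{\max}) = \mu _{CZ}(\Psi)$ in the Floer normalization. On the other hand, computing the PSS image of the fundamental class with a $C ^{2}$-small Morse Hamiltonian, $[M]$ is represented by the constant orbit at a maximum whose linearized flow is a small rotation --- this is exactly the reference path $\Psi ^{0}$ --- so $CZ([M]) = \mu _{CZ}(\Psi ^{0})$. Substituting gives $\ind _{L ^{+}}(\gamma) = \bigl|CZ(o _{\max}) - CZ([M])\bigr|$, which is \eqref{eq.difference}; in particular, when $\gamma$ is $C ^{2}$-close to the identity path one recovers the index $0$ characterization of length-minimizers of \cite{citeMcDuffSlimowitzHofer--ZehndercapacityandlengthminimizingHamiltonianpaths}. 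The main obstacle is the middle step: one must verify that Ustilovsky's Hessian --- which arises from a min--max functional rather than the usual action functional --- really is equivalent \emph{as a quadratic form} to the index form of the linearized Hamiltonian system, that the fixed-endpoint constraint produces an honest Conley--Zehnder index with no correction term, and that all signs are tracked so that the absolute value, forced by $x _{\max}$ being a genuine maximum, comes out correctly. A minor technical point, already noted in \cite{citeSavelyevProofoftheindexconjectureinHofergeometry}, is that one may first need to perturb $\gamma$ slightly to ensure $\widetilde{\phi}$ is Floer nondegenerate.
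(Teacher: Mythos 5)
You should first be aware that the paper does not prove Theorem \ref{thm.indexUst} at all: it is imported from \cite{citeSavelyevProofoftheindexconjectureinHofergeometry}, so there is no internal proof to compare against, and your proposal has to stand on its own. Its skeleton --- localize $L^{+}$ at $x_{\max}$ via the envelope argument, reduce the Hessian to a quadratic form built from the linearized flow $\Psi_t$ at $x_{\max}$, convert its Morse index into a Conley--Zehnder quantity, and compute $CZ([M])=-n$ from a $C^{2}$-small Morse Hamiltonian --- is the natural strategy and is presumably in the spirit of the cited reference. But as a proof it has a genuine gap exactly at the step you yourself flag, and the way you set that step up is where it would go wrong.

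Concretely: after Ustilovsky's second variation formula the admissible variations are encoded by paths $w(t)$ in $T_{x_{\max}}M$ with $w(0)=w(1)=0$, but the degeneracies the resulting index form detects are of \emph{periodic} type --- times $t_0\in(0,1)$ with $\ker(\Psi_{t_0}-\mathrm{Id})\neq 0$, i.e.\ closed orbits of the linearized flow, as in Ustilovsky's and Lalonde--McDuff's stability criteria --- not Dirichlet-type conjugate points for a pinned Lagrangian boundary condition, which is how you phrase it. Since $-\mathrm{Hess}\,H^{\gamma}_t(x_{\max})$ is positive definite, all Robbin--Salamon crossings of $\Psi$ are definite, so with the paper's normalization $CZ(o_{\max})=-n-\sum_{0<t<1}\dim\ker(\Psi_t-\mathrm{Id})$ while $CZ([M])=-n$; hence the theorem is precisely the assertion that the Morse index of Ustilovsky's Hessian equals $\sum_{0<t<1}\dim\ker(\Psi_t-\mathrm{Id})$. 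Your appeal to a ``Duistermaat-type theorem with no boundary defect'', together with the unargued identification of the reference path $\Psi^{0}$ with the small rotation computing $CZ([M])$, is this assertion restated rather than proved: passing between the endpoint-constrained variational problem in $w$ and the diagonal-type Maslov count generally produces boundary (H\"ormander-index) corrections, and showing that they vanish here, and that Ustilovsky's min--max Hessian really is equivalent as a quadratic form to that index form, is the actual content of the theorem. Two smaller points: Floer nondegeneracy of $\widetilde{\phi}$ is not a hypothesis of Theorem \ref{thm.indexUst} (it is part of ``robust''), so you may not assume $\det(\Psi_1-\mathrm{Id})\neq 0$ without a perturbation argument you would have to supply; and $C^{1}$-closeness of paths should be closeness of the generating Hamiltonians in $C^{2}$ for the nondegenerate maximum to persist.
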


If the Conley-Zehnder index is normalized as in
\cite{citeRobbinSalamonTheMaslovindexforpaths.}, then
 by the proof of \ref{thm.indexUst}, $CZ (o_{\max})
- CZ ( [M]) \leq 0$. (With $CZ ( [M]) = -n$.) This is the normalization that will
be assumed for the grading of the Floer chain complex.  (Although it will be
implicit.)
\subsection {Statement of main theorems}

\begin{definition} \label{def.unstable} Let $\gamma \in \mathcal {P} _{\phi}$ be
   an index $k$ Ustilovsky geodesic and let $B ^{k} $ denote the standard
   $k$-ball in $ \mathbb{R} ^{n} $, centered at the origin 0. Let $ \mathcal {P}
   _{\phi,  E ^{\gamma}} $ denote the $ E ^{\gamma}$ sub-level set of $ \mathcal
   {P} _{\phi}$, with respect to $L ^{+} $, with $0 < E ^{\gamma} < L ^{+}
   (\gamma)$.  \emph { \textbf{A local unstable manifold}} at $\gamma $ is a
   pair $ (f _{\gamma}, E ^{\gamma}) $, with  $f _{\gamma}: B ^{k} \to \mathcal
   {P} _{\phi}$, s.t. $f _{\gamma} (0)  = \gamma$, $f _{\gamma} ^{*} L ^{+} $ is
   a function Morse at the unique maximum $0 \in B ^{k}$, and s.t. $f _{\gamma}
   (\partial B) \subset \mathcal {P} _{\phi, E ^{\gamma}}$, \end{definition}

Note that local unstable manifolds for a given Ustilovsky geodesic always exist
as can be immediately deduced from \cite[2.2A]{citeUstilovskyConjugatepointsongeodesicsofHofersmetric}, for geodesics coming from
circle actions  elegant explicit local unstable manifolds are constructed in
\cite{citeKarshonSlimowitzShorteningtheHoferlengthofHamiltoniancircleactions}.
\begin{theorem} \label{thm.virtual} Suppose that $ (M, \omega) $ 
   is a monotone symplectic manifold and $\gamma \in \mathcal {P}
_{\phi}$ is a Morse index $k$, robust Ustilovsky geodesic. 
Then there is a $\gamma' \in \mathcal {P} _{\phi'}$, which is a robust Ustilovsky geodesic
arbitrarily $C ^{\infty} $-close to $ \gamma $, a number $E ^{\gamma'} < L ^{+}
(\gamma') $ arbitrarily close to $L ^{+} (\gamma') $ and $(f _{\gamma'}, E
^{\gamma'})$ a local unstable manifold at $\gamma' $
, s.t. \begin{equation*} 0 \neq [f _{\gamma'}] \in  \pi _{k} ( \mathcal {P} _{\phi'},
\mathcal {P} _{\phi',  E ^{\gamma'}}) \otimes \mathbb{Q}.
\end{equation*}
\end{theorem}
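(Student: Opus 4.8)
The plan is to build a Gromov–Witten–theoretic "detecting class" out of the robustness hypothesis and pair it against the local unstable manifold. The underlying principle — borrowed from \cite{citeSavelyevVirtualMorsetheoryon$Omega$Ham$(Momega)$.} — is that a loop (or path) of Hamiltonian symplectomorphisms determines, via the clutching construction, a Hamiltonian fibration $P_\gamma \to S^2$ (or over a disc, for a path relative to $\phi$), and that the Seidel-type count of sections of this fibration through a fixed constraint encodes the Hofer length: a section in class $A$ has symplectic area bounded above in terms of $L^+$ of the path. So the first step is to set up, for the perturbed geodesic $\gamma'$, the fibration over the disc together with its boundary marking at $\phi'$, choose a domain-dependent almost complex structure $\{J_n\}$ compatible with the fibration and adapted to the maximum point $x_{\max}$, and define the relevant moduli space $\cMc$ of sections with one marked point constrained to lie on the fiber over a boundary point, evaluating to $x_{\max}$.

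**Next** I would run the automatic transversality argument that the introduction advertises as the heart of the paper. The point is that for the section corresponding to the constant orbit $o_{\max}$ at the maximum, the normal operator splits off the vertical direction at $x_{\max}$, and because $H^{\gamma}_t$ is Morse there (Ustilovsky condition) with a \emph{strict} maximum, the contribution of Hofer geometry forces the linearized operator to be surjective — this is the "uses Hofer geometry to conclude transversality" phenomenon, and it is exactly here that monotonicity is needed so that no abstract perturbation is required (a perturbation would, as the abstract says, decouple the count from Hofer geometry and destroy the argument). Having established that this particular section is regular and, by the robustness (semi-homological essentiality of $o_{\max}$ in $CF(\gamma')$), that it is not killed in the relevant Floer/quantum differential, one concludes that the algebraic count of sections through $x_{\max}$ in the appropriate class is nonzero; concretely, $o_{\max}$ survives to a nonzero class under the PSS-type map, using Theorem~\ref{thm.indexUst} to identify the index $k = |CZ(o_{\max}) - CZ([M])|$ with the dimension of the moduli problem so that the count is a number rather than a higher-dimensional cycle.

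**Then** comes the translation of this nonvanishing count into the homotopy-theoretic conclusion. Given a local unstable manifold $(f_{\gamma'}, E^{\gamma'})$, the family of paths $f_{\gamma'}(b)$, $b \in B^k$, produces a family of Hamiltonian fibrations over the disc, hence a section-counting invariant that depends only on the class $[f_{\gamma'}] \in \pi_k(\mathcal{P}_{\phi'}, \mathcal{P}_{\phi', E^{\gamma'}})$: the key area estimate says that for $b \in \partial B^k$, where $L^+(f_{\gamma'}(b)) \le E^{\gamma'} < L^+(\gamma')$, the fibration admits \emph{no} holomorphic section in the class $A$ carrying the maximal area, because such a section's area is bounded by $L^+$ of the path, which is now too small. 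Hence the section moduli space over $\partial B^k$ is empty in that class, so the parametrized count defines a homomorphism $\pi_k(\mathcal{P}_{\phi'}, \mathcal{P}_{\phi', E^{\gamma'}}) \otimes \mathbb{Q} \to \mathbb{Q}$ sending $[f_{\gamma'}]$ to the nonzero number computed above; therefore $[f_{\gamma'}] \ne 0$. The passage from $\gamma$ to the nearby $\gamma'$ and the choice of $E^{\gamma'}$ close to $L^+(\gamma')$ is forced precisely because we need $\gamma'$ genuinely Floer-nondegenerate and the sub-level set to exclude the top-area section on the boundary while still retaining it at the center — a $C^\infty$-small perturbation of $\gamma$ keeping it Ustilovsky and robust, available by the openness of the robust condition together with \cite[2.2A]{citeUstilovskyConjugatepointsongeodesicsofHofersmetric}.

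**The main obstacle** I expect is making the automatic transversality genuinely rigorous in this relative/disc setting: one must check that the linearized $\bar\partial$-operator at the distinguished section is surjective \emph{without} any generic perturbation, handling the boundary condition at $\phi'$ and the constraint at $x_{\max}$ simultaneously, and one must verify that this regularity persists in the $k$-parameter family over the interior of $B^k$ (or at least that the count is well-defined via a cobordism argument), all while controlling bubbling — monotonicity rules out the worst sphere bubbles, but one still needs the marked section to be the unique contributor, which is where the strict-maximum Hofer estimate does double duty, simultaneously giving transversality and ruling out competing configurations of the same area. Secondarily, one must be careful that "semi-homologically essential" is exactly the algebraic input needed: it guarantees $o_{\max}$ is orthogonal to the image of the differential, hence its coefficient in any cycle representing $[M]$ (or in the PSS image) is an honest invariant, which is what upgrades the single regular section into a nonzero rational pairing.
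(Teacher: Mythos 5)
Your global architecture agrees with the paper's: a parametrized count of holomorphic sections of $M\times\mathbb{C}$ over the ball $B^k$, an area/coupling-form estimate showing the moduli space is empty for $b$ near $\partial B^k$ (so the count only depends on the relative class), and a count of $\pm 1$ at the center forcing $[f_{\gamma'}]\neq 0$. But the central step of your outline contains a genuine gap: you assert that the Ustilovsky condition (strict, Morse maximum of $H^\gamma_t$ at $x_{\max}$) "forces the linearized operator to be surjective". It does not, and in general cannot: the restricted operator at $\sigma_{\max}$ has Fredholm index $CZ(o_{\max})-CZ([M])=-k$, so for $k>0$ it is never surjective, and surjectivity of the full operator including the $T_0B^k$ summand is not automatic either. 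The paper's mechanism is different, and it is exactly the reason the statement produces a perturbed geodesic $\gamma'$: one replaces $H^{\gamma}$ near $x_{\max}$ by its Hessian in a $j$-holomorphic Darboux chart, so that a normal neighborhood of $\sigma_{\max}$ in $(M\times\mathbb{C},J^{\mathcal{A}'})$ becomes biholomorphic to a neighborhood of the zero section of a $\mathbb{C}^n$-bundle over $\mathbb{C}$ with integrable structure induced by a unitary connection (Atiyah--Bott); the restricted operator is then a Dolbeault operator, and a nonzero kernel element would exponentiate to a second holomorphic section near $\sigma_{\max}$, contradicting the uniqueness of $\sigma_{\max}$ established in Proposition \ref{lemma.induced} (this uniqueness, via the area estimate, is the real "Hofer geometry" input). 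So the passage $\gamma\mapsto\gamma'$ is needed to kill the kernel of the restricted operator, not, as you claim, to achieve Floer nondegeneracy (which is already part of robustness) or by "openness of the robust condition".

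Even granting no kernel, a cokernel of dimension $k$ remains, and your proposal offers no mechanism for it. In the paper this is Theorem \ref{thm.robustustilovsky}: one perturbs the admissible family $\{\mathcal{A}_b\}$ within a special class of connection deformations $d(G\,d\theta)$ with $G=\zeta(r)K$ and $K(x_{\max})=0$, so that the $T_0B$ directions cover a complement of the image of the restricted operator (using surjectivity of the universal linearization), while Lemma \ref{lemma.crit} guarantees such perturbations keep the area function on $B^k$ critical and Morse at the unique maximum $b=0$; hence the uniqueness argument of Proposition \ref{lemma.induced} still applies, $(\sigma_{\max},0)$ remains the only element and is now regular, giving the count $\pm 1$. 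Note also that robustness is not used to show $o_{\max}$ "survives a PSS-type map": it enters in Lemma \ref{lemma.defined}, where the codimension-one breaking strata of the deformation cobordism contribute signed counts of Floer trajectories into $o_{\max}$, and semi-homological essentiality is exactly what forces these contributions to vanish, making the invariant well defined on $\pi_k(\mathcal{P}_{\phi'},\mathcal{P}_{\phi',E^{\gamma'}})\otimes\mathbb{Q}$. Without the Hessian-model argument killing the kernel and the cokernel-covering perturbation compatible with the area-Morse property, the "automatic transversality" step of your outline does not go through as stated.
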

 Below we list some examples of robust Ustilovsky geodesics.
 \begin{example} \label{ex.robust} 
An Ustilovsky geodesic $\gamma \in \mathcal {P} _{\phi} $ is robust, for example
if $ CF (\gamma) $ is perfect.  An explicit example for $M=S ^{2}$ with
arbitrary index   could be obtained as follows. Suppose $H$ generates a $k+1/2 $
fold rotation of $S ^{2} $.  In this case $H$ is Floer non-degenerate and $CF
(\gamma) $ is perfect simply by virtue of not having odd degree generators. (It
is generated by $o _{\max}$, $o _{\min} $ as a module over the Novikov ring.)
 The index of $\gamma $ is $2k$.
\end{example}
\begin{example} \label{ex.main}
We may generalize the above example  as follows.  Suppose we have a symplectic manifold
   $M$ and $H$ a Morse Hamiltonian generating a semi-free circle action in time 1. Here
semi-free means that the isotropy group of every point in $M$ is either trivial
or the whole group.  Then the time 1 flow map for $\tau \cdot H$, with $\tau \not\in
\mathbb {N}$ is Floer
non-degenerate, and has no non-constant period 1 orbits.
The CZ index of all the constant period 1 orbits, i.e. critical points of $H$
 in this case must be even,
which can be readily checked, as the linearized flow at the critical points is a
path in $U (n)$. (Linearizing the circle action itself we get an $S ^{1}$
subgroup of $Symp (\mathbb{R} ^{2n})$ which must be unitary as $U (n)$ is the
maximal compact subgroup of $Symp(\mathbb{R} ^{2n})$). 
In particular the Floer complex is perfect, cf.
\cite{citeKermanLalondeLengthminimizingHamiltonianpathsforsymplecticallyasphericalmanifolds.(Diffeotopieshamiltoniennesminimisantesdanslesvarietessymplectiquementaspheriques.)}, 
\cite{citeMcDuffSlimowitzHofer--ZehndercapacityandlengthminimizingHamiltonianpaths}. 
\end{example}
We may consequently get a more explicit version of \ref{thm.virtual} as follows.
\begin{theorem} \label{thm.special.case} Suppose $\gamma$ is an index $ k $
Ustilovsky geodesic, which in addition is a path determined by a Hamiltonian circle action, (is a
restriction thereof). If $ (f_\gamma, E _{\gamma}) $ is a local unstable manifold for $\gamma $ then
$$0 \neq [f _{\gamma}] \in  \pi _{k} ( \mathcal {P} _{\phi},
\mathcal {P} _{\phi,  E ^{\gamma}}).$$
\end{theorem}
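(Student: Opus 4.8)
The strategy is to re-run the proof of Theorem~\ref{thm.virtual} in this more rigid situation, and to use two special features of Hamiltonian circle actions to (i) dispense with the auxiliary perturbation $\gamma\rightsquigarrow\gamma'$, and (ii) upgrade the conclusion from $\mathbb{Q}$- to $\mathbb{Z}$-coefficients. I would first reduce to a single convenient choice of local unstable manifold: by Ustilovsky's local normal form for $L^{+}$ at an Ustilovsky geodesic (cf. \cite[2.2A]{citeUstilovskyConjugatepointsongeodesicsofHofersmetric}) the $L^{+}$-descending directions at $\gamma$ form a canonical $k$-plane, so any two local unstable manifolds become homotopic as maps of pairs into $(\mathcal{P}_{\phi},\mathcal{P}_{\phi,E})$ once $E$ exceeds both their sublevels, and for $E^{\gamma}\le E_{0}<L^{+}(\gamma)$ the inclusion-induced map $\pi_{k}(\mathcal{P}_{\phi},\mathcal{P}_{\phi,E^{\gamma}})\to\pi_{k}(\mathcal{P}_{\phi},\mathcal{P}_{\phi,E_{0}})$ sends $[f_{\gamma}]$ to the class of a local unstable manifold with sublevel $E_{0}$. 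Hence it suffices to exhibit one local unstable manifold — say one of the explicit Karshon--Slimowitz models \cite{citeKarshonSlimowitzShorteningtheHoferlengthofHamiltoniancircleactions} — and one $E_{0}<L^{+}(\gamma)$ for which the class is nonzero in $\pi_{k}(\mathcal{P}_{\phi},\mathcal{P}_{\phi,E_{0}})$.

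The detection invariant is the one used for Theorem~\ref{thm.virtual}: to a map of pairs $f\colon(B^{k},\partial B^{k})\to(\mathcal{P}_{\phi},\mathcal{P}_{\phi,E_{0}})$ one associates the $B^{k}$-parametrised moduli space of holomorphic sections of the Floer--PSS bundles $P_{f(b)}$ over the disc, asymptotic at the puncture to $o_{\max}$ with the cap through a generic point; by the index count of Theorem~\ref{thm.indexUst} (and the $\le 0$ normalization noted afterwards) the parametrised space is rigid, and its signed count is an obstruction-type quantity $\Phi_{E_{0}}$ which vanishes on classes that are trivial in $\pi_{k}(\mathcal{P}_{\phi},\mathcal{P}_{\phi,E_{0}})$. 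Two inputs then close the argument. (a) \emph{If $[f]=0$ then $\Phi_{E_{0}}[f]=0$}: a pair-homotopy of $f$ into $\mathcal{P}_{\phi,E_{0}}$ passes only through bundles with $L^{+}<E_{0}$, whereas the section class hitting $o_{\max}$ has symplectic area bounded below by a quantity approaching $L^{+}(\gamma)$ — this is the identification of the area of the constant section at $x_{\max}$ with $\int_{0}^{1}\max_{M}H^{\gamma}_{t}\,dt=L^{+}(\gamma)$, together with the monotonicity of $L^{+}$ along holomorphic sections — so, since $E_{0}<L^{+}(\gamma)$ strictly, Gromov compactness forces the parametrised moduli space to be empty. (b) \emph{$\Phi_{E_{0}}[f_{\gamma}]=\pm1$}: choosing the time-parameter of the circle action outside $\mathbb{N}$ makes $\widetilde{\phi}$ Floer non-degenerate with $CF(\gamma)$ generated by the constant orbits at the fixed points (so $\gamma$ is robust, as in Example~\ref{ex.main}), and the $S^{1}$-equivariance of $P_{\gamma}$ localises the moduli space computing $\Phi_{E_{0}}[f_{\gamma}]$ at the constant section over $x_{\max}$, whose linearised operator is the $\bar\partial$-operator on a direct sum of holomorphic line bundles of the appropriate degrees — hence surjective, the automatic-transversality phenomenon of the introduction — and gives the count $\pm1$ in the manner of McDuff--Tolman. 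Combining (a) and (b) gives $[f_{\gamma}]\neq 0$ already over $\mathbb{Z}$, and by the first step the same holds for every local unstable manifold.

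The principal difficulty is exactly step (b): showing that \emph{without} perturbing $\gamma$ the parametrised section moduli space is transversally cut out and the count is precisely $\pm1$. This is where the circle action is indispensable — one must marry equivariant transversality for $P_{\gamma}$ with the explicit linearised operator at the fixed-point section, and rule out contributions of lower-area section classes and of bubbling; this is the delicate point that in the general monotone case of Theorem~\ref{thm.virtual} forced the perturbation $\gamma\rightsquigarrow\gamma'$ and the passage to $\mathbb{Q}$. A secondary technical point is making the homotopy invariance of $\Phi_{E_{0}}$ rigorous in the parametrised, boundary-constrained setting, and verifying that the lower area bound in (a) is uniform along homotopies staying inside $\mathcal{P}_{\phi,E_{0}}$.
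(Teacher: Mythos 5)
Your overall plan (detect $[f_\gamma]$ by a parametrized count of holomorphic sections asymptotic to $o_{\max}$, kill the moduli space over the sublevel set by the area/energy estimate, and avoid perturbing $\gamma$ by exploiting the circle action) is the paper's route, but your step (b) contains a genuine error at exactly the point you yourself identify as the principal difficulty. You assert that the linearized operator at the constant section $\sigma_{\max}$ is ``the $\bar\partial$-operator on a direct sum of holomorphic line bundles of the appropriate degrees --- hence surjective''. This is impossible for $k>0$: by Theorem \ref{thm.indexUst} the restricted operator $D^{rest}_{\sigma_{\max}}$ has Fredholm index $CZ(o_{\max})-CZ([M])=-k<0$, so it can never be surjective. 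What the integrable local model coming from the circle action actually gives (Proposition \ref{example.satisfied}, via the proof of Theorem \ref{thm.virtual}) is \emph{injectivity}: a nonzero kernel element $\xi$ would exponentiate to a second holomorphic section $\epsilon\cdot\xi$ near $\sigma_{\max}$, contradicting the uniqueness established in Proposition \ref{lemma.induced}. One is then left with a $k$-dimensional cokernel, and the whole content of Theorem \ref{thm.robustustilovsky} is that this cokernel can be covered by the $T_0B$-directions after perturbing the admissible family $\{\mathcal{A}_b\}$ inside the constrained class $\mathcal{H}$ (connections with $K(x_{\max})=0$), with Lemma \ref{lemma.crit} guaranteeing that the perturbation keeps the area function Morse with unique maximum at $b=0$, so that no new solutions are created and the count remains $\pm1$. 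Your appeal to ``equivariant transversality in the manner of McDuff--Tolman'' does not substitute for this: the transversality needed here is parametric, not fiberwise, and is not automatic in the usual sense.

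Two further points. First, your homotopy-invariance step hides where robustness is actually used: along a homotopy the parametrized one-dimensional cobordism can have boundary points corresponding to \emph{broken} configurations (a section asymptotic to a generator $g$ with $CZ(g)=CZ(o_{\max})+1$ followed by a Floer trajectory into $o_{\max}$), and it is precisely the semi-homological essentiality of $o_{\max}$ (Lemma \ref{lemma.defined}) that makes these contributions cancel; area bounds and ruling out bubbling do not handle this. Second, your opening reduction --- that any two local unstable manifolds are pair-homotopic because the $L^{+}$-descending directions form ``a canonical $k$-plane'' --- is unproven (the functional $L^{+}$ is not smooth, and Ustilovsky's normal form gives only a second variation along specific variations) and is also unnecessary: the paper's argument, and the invariant you construct, apply directly to the given $(f_\gamma,E^{\gamma})$ without singling out the Karshon--Slimowitz model.
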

This follows by \ref{thm.robustustilovsky},  \ref{example.satisfied}, and \ref{ex.main}.
It  is worth pointing out for comparison that the  length/energy functional on
the path space (with fixed end points) of a smooth Riemannian manifold $X,g$,
may have lots of critical points (i.e. geodesics) which do not satisfy the
analogue of the property above.  It is tricky to describe higher index examples
without getting completely side-tracked. We can for example produce a Riemannian
manifold, and a submanifold $U$ of the path space with $U$ shaped like a heart
shaped sphere for the energy functional, with non-degenerate critical points
(geodesics) of index 2,1,2,0.  The index 1 geodesic in this case clearly does
not have the analogue of the property of Theorem \ref {thm.virtual} above. In
fact it is possible to show that if all the geodesics of $X,g$ satisfy the
property above and are all non-degenerate then the energy functional is perfect,
i.e. the $i$'th Betti number of the path space is the number of index $i$
geodesics.  In the index 0 case here is a very elementary concrete example.
Deform the $z=0$ isometric embedding of $ \mathbb{R} ^{2} $ into $ \mathbb{R}
^{3}$ so that the image acquires a single mountain (and is flat elsewhere).
Pull-back the metric.  Take points $p,q \in \mathbb{R} ^{2}$ to be on the
opposite side of the mountain.  There are a pair of geodesics going around the
base of the mountain.  We may shape the mountain,  so that none of their
sufficiently small perturbations are length minimizing.

Theorem \ref{thm.virtual} of course also give restrictions on absolute homotopy
groups. Considering the classical long exact sequence for
relative homotopy groups (tensored with the flat $ \mathbb{Z} $ module $
\mathbb{Q}$) we get the following: 
\begin{corollary} \label{corollary.absolute} Under assumptions of the theorem
above, either $\pi_{k-1} (\mathcal {P} _{\phi',  E ^{\gamma'}}, \mathbb{Q})
\neq 0$ or $\pi_k ( \mathcal {P} _{\phi}, \mathbb{Q}) \simeq \pi _{k+1} ( \text
{Ham}(M, \omega), \mathbb{Q}) \neq 0$.
\end{corollary}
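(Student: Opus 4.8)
The plan is to obtain this as a purely formal consequence of Theorem~\ref{thm.virtual}, via the long exact homotopy sequence of a pair together with the identification of the relevant path space with a loop space of $\text{Ham}(M,\omega)$. First I would invoke Theorem~\ref{thm.virtual} to fix a robust Ustilovsky geodesic $\gamma' \in \mathcal{P}_{\phi'}$ arbitrarily $C^\infty$-close to $\gamma$, a number $E^{\gamma'} < L^+(\gamma')$, and a local unstable manifold $(f_{\gamma'}, E^{\gamma'})$ with
\begin{equation*}
0 \neq [f_{\gamma'}] \in \pi_k(\mathcal{P}_{\phi'}, \mathcal{P}_{\phi', E^{\gamma'}}) \otimes \mathbb{Q}.
\end{equation*}
Since $\mathbb{Q}$ is flat over $\mathbb{Z}$, tensoring the long exact homotopy sequence of the pair $(\mathcal{P}_{\phi'}, \mathcal{P}_{\phi', E^{\gamma'}})$ with $\mathbb{Q}$ keeps it exact, and I would isolate the three-term exact piece
\begin{equation*}
\pi_k(\mathcal{P}_{\phi'}) \otimes \mathbb{Q} \xrightarrow{\iota_*} \pi_k(\mathcal{P}_{\phi'}, \mathcal{P}_{\phi', E^{\gamma'}}) \otimes \mathbb{Q} \xrightarrow{\partial} \pi_{k-1}(\mathcal{P}_{\phi', E^{\gamma'}}) \otimes \mathbb{Q}.
\end{equation*}

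Next I would run the dichotomy on the class $[f_{\gamma'}]$. If $\partial[f_{\gamma'}] \neq 0$ then $\pi_{k-1}(\mathcal{P}_{\phi', E^{\gamma'}}, \mathbb{Q}) \neq 0$, which is the first alternative. If instead $\partial[f_{\gamma'}] = 0$, then by exactness $[f_{\gamma'}]$ lies in the image of $\iota_*$; since $[f_{\gamma'}] \neq 0$, this forces $\pi_k(\mathcal{P}_{\phi'}) \otimes \mathbb{Q} \neq 0$.

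Finally I would replace $\mathcal{P}_{\phi'}$ by $\text{Ham}(M,\omega)$. Because $\text{Ham}(M,\omega)$ is a path-connected topological group, for any $\psi$ the space $\mathcal{P}_\psi$ of paths from $\mathrm{id}$ to $\psi$ is homotopy equivalent to the based loop space $\Omega\text{Ham}(M,\omega)$ (concatenate each path with a fixed reference path from $\mathrm{id}$ to $\psi$); in particular $\mathcal{P}_{\phi'} \simeq \mathcal{P}_\phi$ and $\pi_k(\mathcal{P}_\phi) \cong \pi_k(\Omega\text{Ham}(M,\omega)) \cong \pi_{k+1}(\text{Ham}(M,\omega))$, so in the second case $\pi_k(\mathcal{P}_\phi, \mathbb{Q}) \simeq \pi_{k+1}(\text{Ham}(M,\omega), \mathbb{Q}) \neq 0$. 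There is no substantial obstacle here: all the work sits in Theorem~\ref{thm.virtual}, and the only points requiring minor care are the $\phi$-versus-$\phi'$ bookkeeping — handled by the homotopy equivalence, which in particular shows the $\mathbb{Q}$-rank of $\pi_k(\mathcal{P}_\psi)$ is independent of $\psi$ — and the low-degree cases, where for $k \geq 2$ every term above is already an abelian group, and for $k = 1$ one reads the relative term as a pointed set and argues the same way.
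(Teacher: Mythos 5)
Your proposal is correct and is essentially the paper's own argument: the paper deduces the corollary directly from Theorem~\ref{thm.virtual} by tensoring the long exact homotopy sequence of the pair $(\mathcal{P}_{\phi'},\mathcal{P}_{\phi',E^{\gamma'}})$ with the flat $\mathbb{Z}$-module $\mathbb{Q}$ and using $\mathcal{P}_{\phi}\simeq \Omega\,\text{Ham}(M,\omega)$, exactly as you do. Your extra care with the $\phi$ versus $\phi'$ bookkeeping and the low-degree cases only makes explicit what the paper leaves implicit.
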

More explicitly, if the boundary of the local unstable manifold $(f _{\gamma'},
E _{\gamma'})$ can be contracted inside $\mathcal {P} _{\phi',  E ^{\gamma'}}$ we
get a sphere representing a non-trivial class in $\pi_k (\mathcal {P} _{\phi},
\mathbb{Q}) \simeq \pi _{k-1} (\text {Ham}(M, \omega), \mathbb{Q})$.

Here is one concrete  corollary: \begin{corollary} \label{corollary} Given a
   local unstable manifold $ (f_\gamma, E _{\gamma}) $ at a robust index $k>2 $
   Ustilovsky geodesic $\gamma $ in the path space of $ \text {Ham}(S ^{2}) $, $
   \mathcal {P} _{\phi} $, and $\gamma' $ as in \ref{thm.virtual}, the class of
   the boundary map $[\partial f _{\gamma'}]: B ^{k-1} \to \mathcal {P} _{\phi',
      E _{\gamma'}} $ is non-zero in $\pi _{k-1} ( \mathcal {P} _{\phi', E
      _{\gamma'}}, \mathbb{Q})$. The same holds for $M = \mathbb{CP} ^{2} $ if
   $k>4$.  \end{corollary}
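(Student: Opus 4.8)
The plan is to run the long exact sequence argument of Corollary~\ref{corollary.absolute}, but now to eliminate the second alternative appearing there by hand, using the known rational homotopy type of $\text{Ham}(M,\omega)$ in these two special cases. First I would recall the standard identification: choosing once and for all a path from $id$ to $\phi'$ and concatenating with it gives a homotopy equivalence $\mathcal{P}_{\phi'} \simeq \Omega\,\text{Ham}(M,\omega)$, and since the components of a based loop space are all homotopy equivalent (translate by a fixed loop) one gets $\pi_k(\mathcal{P}_{\phi'}) \cong \pi_k(\Omega\,\text{Ham}(M,\omega)) \cong \pi_{k+1}(\text{Ham}(M,\omega))$ for every $k \geq 1$, independently of the basepoint component. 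This is precisely the classical ingredient already implicit in Corollary~\ref{corollary.absolute}.

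Next I would invoke Gromov's computation of the homotopy type of the symplectomorphism groups in question: $\text{Ham}(S^2,\omega) \simeq SO(3)$ and $\text{Ham}(\mathbb{CP}^2,\omega_{FS}) \simeq PU(3)$ (in both cases $\text{Ham}$ is the full identity component of $\text{Symp}$, since $H^1$ vanishes). Rationally $SO(3)\simeq_{\mathbb{Q}} S^3$, so $\pi_n(SO(3))\otimes\mathbb{Q}$ vanishes for $n>3$; and from the finite cover $SU(3)\to PU(3)$ together with $SU(3)\simeq_{\mathbb{Q}} S^3\times S^5$ one gets $\pi_n(PU(3))\otimes\mathbb{Q} \cong \pi_n(SU(3))\otimes\mathbb{Q}$ for $n\geq 2$, which vanishes for $n>5$. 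Hence $\pi_{k+1}(\text{Ham}(S^2))\otimes\mathbb{Q}=0$ whenever $k>2$, and $\pi_{k+1}(\text{Ham}(\mathbb{CP}^2))\otimes\mathbb{Q}=0$ whenever $k>4$; so in the respective ranges $\pi_k(\mathcal{P}_{\phi'})\otimes\mathbb{Q}=0$.

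To conclude, I would tensor the exact piece
\begin{equation*}
\pi_k(\mathcal{P}_{\phi'}) \longrightarrow \pi_k\big(\mathcal{P}_{\phi'},\, \mathcal{P}_{\phi', E^{\gamma'}}\big) \xrightarrow{\ \partial\ } \pi_{k-1}\big(\mathcal{P}_{\phi', E^{\gamma'}}\big)
\end{equation*}
with the flat $\mathbb{Z}$-module $\mathbb{Q}$; since the left-hand term is $0$ rationally by the previous step, $\partial$ becomes injective after $\otimes\mathbb{Q}$. Theorem~\ref{thm.virtual}, applied to the robust index $k$ Ustilovsky geodesic $\gamma$, supplies $\gamma'$, $E^{\gamma'}$ and a local unstable manifold $(f_{\gamma'},E^{\gamma'})$ with $[f_{\gamma'}]\neq 0$ in $\pi_k(\mathcal{P}_{\phi'},\mathcal{P}_{\phi', E^{\gamma'}})\otimes\mathbb{Q}$; as $\partial$ is injective on that group, $[\partial f_{\gamma'}]=\partial[f_{\gamma'}]$ (the class of the restriction of $f_{\gamma'}$ to $\partial B^k$) is non-zero in $\pi_{k-1}(\mathcal{P}_{\phi', E^{\gamma'}},\mathbb{Q})$, which is the claim. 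The only step needing care — and it is the nearest thing to an obstacle — is verifying that the approximating geodesic $\gamma'$ still has Morse index exactly $k$; this follows from Theorem~\ref{thm.indexUst}, the Conley–Zehnder indices entering \eqref{eq.difference} being unchanged under a sufficiently small $C^\infty$-perturbation of the geodesic. No further analytic input is required: the genuine content, non-triviality in $\pi_k$ of the pair, is already carried by Theorem~\ref{thm.virtual}, and the only external facts used are Gromov's identifications of $\text{Ham}(S^2)$ and $\text{Ham}(\mathbb{CP}^2)$.
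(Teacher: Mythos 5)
Your proposal is correct and follows essentially the same route as the paper: the paper's proof is exactly the long exact sequence argument of Corollary~\ref{corollary.absolute} combined with the vanishing $\pi_k(\mathcal{P}_{\phi'},\mathbb{Q})\cong\pi_{k+1}(\text{Ham}(S^2),\mathbb{Q})=0$ for $k>2$ (via $\text{Ham}(S^2)\simeq SO(3)$) and $\pi_{k+1}(\text{Ham}(\mathbb{CP}^2),\mathbb{Q})=\pi_{k+1}(PSU(3),\mathbb{Q})=0$ for $k>4$, so the boundary map is rationally injective and Theorem~\ref{thm.virtual} supplies the non-zero relative class. Your extra worry about the index of $\gamma'$ is unnecessary, since Theorem~\ref{thm.virtual} already asserts the class of $f_{\gamma'}$ is non-zero in $\pi_k$ of the pair.
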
 This follows upon noting that for $k>2$, $\pi _{k} (
\mathcal {P} _{\phi'}, \mathbb{Q}) = \pi _{k+1} \text {Ham}(S ^{2}, \mathbb{Q})
=0$, and for in case of $M = \mathbb{CP} ^{2} $ for $k>4 $, $\pi _{k} ( \mathcal
{P} _{\phi'}, \mathbb{Q}) = \pi _{k+1} (\text {Ham}( \mathbb{CP} ^{2}),
\mathbb{Q}) = \pi _{k+1} (PSU (3), \mathbb{Q}) = 0.$ (Note that $PSU (3)$ does
have non-vanishing rational homotopy group in degree 5.)

Note that there are robust Ustilovsky geodesics in $ \text {Ham}(S ^{2}) $ 
of arbitrary even index, and $\phi, \phi' $ can be taken to be
arbitrarily close to $id$ see Example \ref{ex.robust}. In particular although
$\Omega \text {Ham}(S ^{2}) \simeq \Omega SO (3)$ has vanishing rational homotopy groups
 in degree greater than 2, there are $E $ sub-level sets for the
(positive) Hofer length functional, with $E$ arbitrarily large, which do not. 

 Here is another  geometric/dynamical
application of \ref{thm.virtual}, formally reminiscent of the non-existence
result
\cite{citeLalondeMcDuffHofers$Linfty$--geometry:energyandstabilityofHamiltonianflowsIII} for length minimizing Hofer geodesics in $ \text {Ham}(S ^{2})
$.
\begin{corollary} \label{thm.deltaminimizing} 
Given a monotone
$(M, \omega)$, and $\phi \in \text {Ham}(M, \omega) $ s.t. the space of
paths $\delta$ close to minimizing the positive Hofer length (a.k.a $\delta
$-minimizing paths) from $id $ to $\phi$ has vanishing rational homotopy
groups, for some $\delta>0 $,  there is no index $k >0$ robust Ustilovsky geodesic from identity to $\phi $, $ \delta $-close to
   minimizing the Hofer length.
\end{corollary}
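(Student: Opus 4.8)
The plan is to argue by contradiction, reducing directly to Theorem \ref{thm.virtual}. So suppose $\gamma$ is an index $k>0$ robust Ustilovsky geodesic from $id$ to $\phi$, which is $\delta$-close to minimizing the positive Hofer length, i.e. $L^+(\gamma) \leq \rho^+(id,\phi) + \delta$ where $\rho^+$ denotes the positive Hofer pseudo-distance. Applying Theorem \ref{thm.virtual} to $\gamma$, we obtain a robust Ustilovsky geodesic $\gamma' \in \mathcal{P}_{\phi'}$ arbitrarily $C^\infty$-close to $\gamma$, a value $E^{\gamma'} < L^+(\gamma')$ arbitrarily close to $L^+(\gamma')$, and a local unstable manifold $(f_{\gamma'}, E^{\gamma'})$ with $0 \neq [f_{\gamma'}] \in \pi_k(\mathcal{P}_{\phi'}, \mathcal{P}_{\phi', E^{\gamma'}}) \otimes \mathbb{Q}$.

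The first key step is a continuity/stability observation: since $\gamma'$ is $C^\infty$-close to $\gamma$, we have $L^+(\gamma')$ close to $L^+(\gamma)$ and $\phi'$ close to $\phi$, so $\rho^+(id,\phi')$ is close to $\rho^+(id,\phi)$ (the positive Hofer distance is continuous, indeed Lipschitz, in the $C^\infty$ — even $C^0$-Hamiltonian — topology). Hence $\gamma'$ is still $\delta'$-minimizing for $\delta'$ slightly larger than $\delta$, and by choosing the $C^\infty$-approximation in Theorem \ref{thm.virtual} fine enough together with $E^{\gamma'}$ close enough to $L^+(\gamma')$, we can arrange that every path in the image $f_{\gamma'}(B^k)$, and in particular every path in the sublevel set $\mathcal{P}_{\phi', E^{\gamma'}}$ that these boundary paths land in, has positive Hofer length within $\delta$ of the minimum — i.e. $f_{\gamma'}$ factors through the space of $\delta$-minimizing paths from $id$ to $\phi'$. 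Indeed, for a path of length $< E^{\gamma'}$ to be $\delta$-minimizing we only need $E^{\gamma'} < \rho^+(id,\phi') + \delta$, and since $E^{\gamma'}$ is close to $L^+(\gamma') \approx L^+(\gamma) \leq \rho^+(id,\phi) + \delta \approx \rho^+(id,\phi') + \delta$, this is achieved once the approximations are taken fine enough. (One should also note that $L^+$ restricted to any such disk only decreases from the maximum value $L^+(\gamma')$, so the whole disk — not just its boundary — lies in the $\delta$-minimizing subspace.)

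The second step is then formal: the relative class $[f_{\gamma'}] \in \pi_k(\mathcal{P}_{\phi'}, \mathcal{P}_{\phi', E^{\gamma'}}) \otimes \mathbb{Q}$ being nonzero, together with the factorization through the pair $(\mathcal{P}^{\delta\text{-min}}_{\phi'}, \mathcal{P}^{\delta\text{-min}}_{\phi'} \cap \mathcal{P}_{\phi', E^{\gamma'}})$ of spaces of $\delta$-minimizing paths, forces a nonzero class in $\pi_k$ or $\pi_{k-1}$ of the space of $\delta$-minimizing paths from $id$ to $\phi'$ via the long exact sequence of the pair (exactly as in Corollary \ref{corollary.absolute}). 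But by hypothesis the space of $\delta$-minimizing paths to $\phi$ has vanishing rational homotopy; by the same continuity argument this space is homotopy equivalent (or at least rationally so, for $\phi'$ sufficiently close to $\phi$) to the one for $\phi'$, so its rational homotopy also vanishes — a contradiction. The main obstacle is the second sentence of the middle paragraph: one must make the various "arbitrarily close" quantifiers in Theorem \ref{thm.virtual} interact correctly with the fixed $\delta$ and with the (only upper semi-continuous, a priori) behavior of $\rho^+(id,\phi')$ under perturbation of $\phi'$, and one must be careful that changing the endpoint from $\phi$ to $\phi'$ does not destroy the vanishing hypothesis — this requires knowing that the homotopy type of the $\delta$-minimizing path space varies continuously (or controllably) with the endpoint, which is where the genuine work lies.
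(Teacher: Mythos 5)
The paper itself gives no proof of Corollary \ref{thm.deltaminimizing} (it is stated as an application of Theorem \ref{thm.virtual}), so there is no written argument to compare against; judged on its own terms, your proposal follows the natural route (apply \ref{thm.virtual}, check the disk lies among $\delta$-minimizing paths, conclude via a long exact sequence), but the step you call ``formal'' does not actually close. Writing $\mathcal{Q}'$ for the space of $\delta$-minimizing paths from $id$ to $\phi'$ and $\mathcal{P}_{E'}=\mathcal{P}_{\phi',E^{\gamma'}}$, the factorization of $f_{\gamma'}$ through the pair $(\mathcal{Q}',\mathcal{P}_{E'})$ plus exactness gives only the dichotomy: either $\pi_k(\mathcal{Q}')\otimes\mathbb{Q}\neq 0$, or $\partial[f_{\gamma'}]\neq 0$ in $\pi_{k-1}(\mathcal{P}_{E'})\otimes\mathbb{Q}$ --- i.e.\ in the homotopy of the \emph{sublevel set} at level $E^{\gamma'}$, not of the $\delta$-minimizing space. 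These are genuinely different spaces: $E^{\gamma'}<L^{+}(\gamma')\leq \rho^{+}(id,\phi')+\delta$, so $\mathcal{P}_{E'}$ is a strictly smaller sublevel set, and by exactness of the sequence for $(\mathcal{Q}',\mathcal{P}_{E'})$ the class $\partial[f_{\gamma'}]$ automatically dies under the inclusion $\mathcal{P}_{E'}\hookrightarrow\mathcal{Q}'$. So vanishing of the rational homotopy of $\mathcal{Q}'$ is not contradicted: the model situation of a contractible ambient space containing a contractible intermediate subspace $\mathcal{Q}'$, with a nonzero relative class carried entirely by $\pi_{k-1}$ of the small subspace, is perfectly consistent with everything Theorem \ref{thm.virtual} provides. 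To get a contradiction you must either invoke the vanishing hypothesis at the smaller threshold (so that $\mathcal{P}_{E'}$ is itself a space of $\delta''$-minimizing paths for some $\delta''\leq\delta$, which the statement's single fixed $\delta$ does not literally grant), or produce an argument that $[\partial f_{\gamma'}]$ survives into $\pi_{k-1}(\mathcal{Q}')\otimes\mathbb{Q}$; as written, your dichotomy ``nonzero class in $\pi_k$ or $\pi_{k-1}$ of the space of $\delta$-minimizing paths'' is not what the long exact sequence yields (compare the precise form of Corollary \ref{corollary.absolute}, whose first alternative is about the sublevel set).

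The second gap you flag yourself but do not fill: the hypothesis concerns the endpoint $\phi$, while Theorem \ref{thm.virtual} outputs data for a perturbed endpoint $\phi'$. Your proposed remedy --- that the $\delta$-minimizing path space for $\phi'$ is rationally homotopy equivalent to that for $\phi$ when $\phi'$ is close to $\phi$ --- is unjustified: these are sublevel sets of $L^{+}$ at a moving threshold $\rho^{+}(id,\cdot)+\delta$, and sublevel sets need not vary in homotopy type continuously with the parameter. The more robust transfer, concatenating with a Hofer-short path from $\phi'$ to $\phi$, is a homotopy equivalence $\mathcal{P}_{\phi'}\to\mathcal{P}_{\phi}$ but only lands in the $(\delta+\epsilon)$-minimizing space, so it requires strict slack $L^{+}(\gamma)<\rho^{+}(id,\phi)+\delta$ (or a reformulation with $\delta'<\delta$), which the statement as given does not provide. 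In short: the overall strategy is the intended one, but both the endpoint transfer and, more seriously, the final homotopy-theoretic deduction are open in your write-up, so the proposal does not yet constitute a proof.
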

We expect that the condition on $\phi $ holds for any $\phi $
sufficiently close to identity. For $ \text {Ham}(S^{2}) $ this would follow for example if it was known
that $ \text {Ham}(S ^{2}) $ had some contractible epsilon ball with respect to
the (positive) Hofer metric. (Having vanishing rational homotopy groups
would suffice for the above.) Although at the moment it is not clear how to
verify this, in a joint work in progress with Misha Khanevsky we intend to show that
 there is $\epsilon _{k} >0 $ and a Hofer $\epsilon _{k}$-ball $B
_{\epsilon _{k}} $ in the space of Lagrangians in $S ^{2} $ Hamiltonian isotopic
to the equator, with the number of intersections with the equator at most $k$,
such that $B _{\epsilon _{k}} $ is contractible. Some initial results in the
spirit of this discussion are obtained in
\cite{citeLalondeSavelyevOntheinjectivityradiusinHofergeometry}, in
particular we show there that for $\phi \in \text {Ham}(S ^{2}) $ sufficiently
close to identity, there is a $\delta $ so that
inclusion map from the space of $\delta $-minimizing paths into the space of all
paths vanishes on rational homotopy groups. 

 As another corollary we have:
\begin{theorem} \label{thm.index0} Let $\gamma \in \mathcal {P} _{\phi}$ be
a robust, index 0, Ustilovsky geodesic, then  $\gamma$ globally minimizes $L ^{+}
$ in its homotopy class, relative to end points.
\end{theorem}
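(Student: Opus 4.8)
The plan is to read off Theorem~\ref{thm.index0} from the degenerate case $k=0$ of Theorem~\ref{thm.virtual}, together with a short perturbation argument that transfers the conclusion from the auxiliary geodesic $\gamma'$ back to $\gamma$ itself. First I would unwind Theorem~\ref{thm.virtual} when $k=0$: the ball $B^{0}$ is a point, so the ``local unstable manifold'' $(f_{\gamma'}, E^{\gamma'})$ is simply the point $\gamma' \in \mathcal{P}_{\phi'}$, and the non-triviality statement $0 \neq [f_{\gamma'}] \in \pi_{0}(\mathcal{P}_{\phi'}, \mathcal{P}_{\phi', E^{\gamma'}}) \otimes \mathbb{Q}$ unwinds to the assertion that the path component of $\gamma'$ in $\mathcal{P}_{\phi'}$ is disjoint from the sublevel set $\mathcal{P}_{\phi', E^{\gamma'}}$; equivalently, every path in $\mathcal{P}_{\phi'}$ homotopic to $\gamma'$ rel endpoints has $L^{+} \geq E^{\gamma'}$. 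Since Theorem~\ref{thm.virtual} allows $E^{\gamma'}$ arbitrarily close to $L^{+}(\gamma')$ and $\gamma'$ arbitrarily $C^{\infty}$-close to $\gamma$, this is already ``almost'' the desired conclusion, modulo those two approximations.

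To remove them I would argue by contradiction. Put $m := \inf\{L^{+}(\delta) : \delta \in \mathcal{P}_{\phi},\ \delta \simeq \gamma \text{ rel endpoints}\}$, so $m \leq L^{+}(\gamma)$, and suppose $m \leq L^{+}(\gamma) - \eta$ for some $\eta > 0$. Choose $\delta \in \mathcal{P}_{\phi}$ homotopic to $\gamma$ rel endpoints with $L^{+}(\delta) < L^{+}(\gamma) - \eta/2$. Now invoke Theorem~\ref{thm.virtual} with $k = 0$, taking the $C^{\infty}$-neighbourhood of $\gamma$ so small, and $E^{\gamma'}$ so close to $L^{+}(\gamma')$, that (i) $|L^{+}(\gamma') - L^{+}(\gamma)| < \eta/8$, (ii) $E^{\gamma'} > L^{+}(\gamma') - \eta/8$, and (iii) the endpoint $\phi' = \gamma'(1)$ is joined to $\phi$ by a Hofer-short Hamiltonian path $c$, with $L^{+}(c) < \eta/8$, realised as the endpoint trace of a $C^{\infty}$-small isotopy from $\gamma$ to $\gamma'$. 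Concatenating, $\delta' := \delta * c \in \mathcal{P}_{\phi'}$ has $L^{+}(\delta') = L^{+}(\delta) + L^{+}(c) < L^{+}(\gamma) - 3\eta/8 < L^{+}(\gamma') - \eta/4 < E^{\gamma'}$, so $\delta' \in \mathcal{P}_{\phi', E^{\gamma'}}$. On the other hand $\delta' \simeq \gamma * c$ rel endpoints (concatenate a homotopy $\delta \simeq \gamma$ with the constant homotopy at $c$), and $\gamma * c \simeq \gamma'$ rel endpoints because $c$ was taken to be the endpoint trace of a homotopy from $\gamma$ to $\gamma'$; hence $\delta'$ lies in the path component of $\gamma'$ in $\mathcal{P}_{\phi'}$. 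This contradicts the $k=0$ form of Theorem~\ref{thm.virtual}. Therefore $m = L^{+}(\gamma)$, i.e. $\gamma$ globally minimizes $L^{+}$ in its homotopy class rel endpoints.

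The one point requiring real care — and the place I would spell things out — is the bookkeeping around moving the endpoint from $\phi$ to $\phi'$: that $L^{+}$ is additive under concatenation and invariant under monotone reparametrization (standard, from the definition via $\int_{0}^{1}\max_{M} H^{\gamma}_{t}\,dt$); that the endpoint trace $c$ of a $C^{\infty}$-small isotopy can be made Hofer-short (this is where $\gamma' \to \gamma$ in $C^{\infty}$ is genuinely used, via the standard estimate bounding $L^{+}(c)$ by the $C^{0}$-size of the parameter-derivative of the generating Hamiltonians); and — the delicate bit — that one uses this same $c$ both to shorten $\delta$ and to match homotopy classes, so that $\gamma$ and $\gamma'$ really do correspond under the induced bijection of path components of $\mathcal{P}_{\phi}$ and $\mathcal{P}_{\phi'}$ rather than differing by a nontrivial element of $\pi_{1}(\text{Ham}(M,\omega))$. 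All of the genuinely hard work — the automatic-transversality/virtual-count argument, together with the monotonicity hypothesis on $(M,\omega)$, which is inherited from there — is already internal to Theorem~\ref{thm.virtual}, so beyond this bookkeeping I expect no further analytic difficulty.
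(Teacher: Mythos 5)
Your proposal is correct and follows essentially the same route as the paper: the paper's proof simply invokes Theorem \ref{thm.virtual} (implicitly in its $k=0$ form) to get an arbitrarily $C^{\infty}$-close minimizing geodesic $\gamma'$ and states that minimality of $\gamma$ ``immediately follows''. Your contradiction argument with the endpoint-trace path $c$ just makes explicit the limiting/bookkeeping step the paper leaves to the reader.
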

 This result is new, although there are related existing results, for example
 \cite{citeMcDuffSlimowitzHofer--ZehndercapacityandlengthminimizingHamiltonianpaths},
 \cite{citeKermanLalondeLengthminimizingHamiltonianpathsforsymplecticallyasphericalmanifolds.(Diffeotopieshamiltoniennesminimisantesdanslesvarietessymplectiquementaspheriques.)}. These
are essentially based on variations of sufficient conditions on $o _{{\max}} $
for being homologically essential, (in the second case this is generalized to
take into account action intervals). 
\begin{theorem} \label{thm.minimizing2} Under the conditions of theorem
   \ref{thm.special.case} above, the local unstable manifold $$f _{\gamma}:
   (D^k, \partial D^k) \to (\mathcal {P} _{\phi}, \mathcal {P} _{\phi, E
      ^{\gamma}})$$ realizes the minimum in the definition of the semi-norm:
\begin{equation} | \cdot|: \pi _{k} ( \mathcal {P} _{\phi},
\mathcal {P} _{\phi, {E ^{\gamma}}}) \to \mathbb{R} ^{+},
\end{equation} 
given by $$| [f]| = \inf _{f' \in [f]} \max _{b \in D^k} L ^{+} (f' (b)). $$
\end{theorem}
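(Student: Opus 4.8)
\emph{Proof idea.}
The plan is to deduce the statement from the non-triviality assertion of Theorem~\ref{thm.special.case} by a purely formal minimax argument; no new analysis is needed. First I would record the easy inequality. By Definition~\ref{def.unstable} the pulled-back functional $f_\gamma^* L^+$ has its unique maximum at $0 \in D^k$, and $f_\gamma(0) = \gamma$, so $\max_{b \in D^k} L^+(f_\gamma(b)) = L^+(\gamma)$. Since $f_\gamma$ is itself a representative of $[f_\gamma]$, this gives $|[f_\gamma]| \le L^+(\gamma)$, and it remains to prove the matching lower bound $|[f_\gamma]| \ge L^+(\gamma)$; together these show that the infimum defining $|[f_\gamma]|$ is attained at $f_\gamma$.

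For the lower bound I would argue by contradiction. Suppose some representative $f' \in [f_\gamma]$ satisfies $c := \max_{b \in D^k} L^+(f'(b)) < L^+(\gamma)$ (the number $c$ is finite since $D^k$ is compact). Choose a level $E'$ with $\max(c, E^\gamma) < E' < L^+(\gamma)$, which is possible because both $c$ and $E^\gamma$ lie strictly below $L^+(\gamma)$. The key observation is that $(f_\gamma, E')$ is again a local unstable manifold at $\gamma$ in the sense of Definition~\ref{def.unstable}: the Morse condition on $f_\gamma^* L^+$ is unchanged, and $f_\gamma(\partial D^k) \subset \mathcal{P}_{\phi, E^\gamma} \subset \mathcal{P}_{\phi, E'}$ because $E^\gamma < E'$. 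Hence Theorem~\ref{thm.special.case} applies verbatim to the pair $(f_\gamma, E')$ and yields $0 \neq [f_\gamma] \in \pi_k(\mathcal{P}_\phi, \mathcal{P}_{\phi, E'})$.

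On the other hand, the inclusion $\mathcal{P}_{\phi, E^\gamma} \hookrightarrow \mathcal{P}_{\phi, E'}$ induces a homomorphism $\pi_k(\mathcal{P}_\phi, \mathcal{P}_{\phi, E^\gamma}) \to \pi_k(\mathcal{P}_\phi, \mathcal{P}_{\phi, E'})$ carrying the class $[f'] = [f_\gamma]$ to the class of the same map; thus $[f'] = [f_\gamma] \neq 0$ in $\pi_k(\mathcal{P}_\phi, \mathcal{P}_{\phi, E'})$ as well. But $f'(D^k) \subset \mathcal{P}_{\phi, c} \subset \mathcal{P}_{\phi, E'}$ since $c < E'$, so $f'$ has image entirely in the subspace $\mathcal{P}_{\phi, E'}$ and therefore represents the trivial element of $\pi_k(\mathcal{P}_\phi, \mathcal{P}_{\phi, E'})$ --- a contradiction. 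Hence $\max_{b \in D^k} L^+(f'(b)) \ge L^+(\gamma)$ for every $f' \in [f_\gamma]$, so $|[f_\gamma]| = L^+(\gamma) = \max_{b \in D^k} L^+(f_\gamma(b))$, as claimed.

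The argument is soft, so there is no real obstacle once Theorem~\ref{thm.special.case} is in hand; the only point requiring a moment's care is the freedom to raise the sublevel-set threshold from $E^\gamma$ to $E'$ while keeping the \emph{same} map $f_\gamma$ an admissible local unstable manifold, which is exactly what lets us feed the hypothetical cheaper representative $f'$ and the original minimizer $f_\gamma$ into one and the same relative homotopy group. I would also remark, for contrast with Theorem~\ref{thm.virtual}, that this consequence uses none of the automatic-transversality machinery --- it is a formal corollary of non-triviality of the unstable class.
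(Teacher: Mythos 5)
Your argument is correct, but it takes a genuinely different route from the paper. The paper treats this statement as a special case of Theorem~\ref{thm.minimizing}, whose proof re-enters the moduli-space machinery: if some representative $f'\in[f_\gamma]$ had $\max_b L^+(f'(b))<L^+(\gamma)$, then the area/curvature estimate \eqref{eq.calculation} from the proof of Lemma~\ref{lemma.defined} forces the moduli space $\overline{\mathcal M}(\{J^{\mathcal A}_{f',b}\})$ to be empty, so the invariant $\Psi_\gamma([f_\gamma])$ would vanish, contradicting the single regular point (count $\pm1$) produced by Theorem~\ref{thm.robustustilovsky} together with the representative-independence of $\Psi_\gamma$ (Lemma~\ref{lemma.defined}). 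You instead use the non-triviality statement of Theorem~\ref{thm.special.case} as a black box: raise the threshold from $E^\gamma$ to $E'$ with $\max(c,E^\gamma)<E'<L^+(\gamma)$, note that $(f_\gamma,E')$ is still a local unstable manifold in the sense of Definition~\ref{def.unstable}, and then conclude by naturality of the inclusion-induced map on relative homotopy groups and the compression criterion (a map of pairs with image in the subspace is null). This is sound: the statement of Theorem~\ref{thm.special.case} imposes no upper restriction on the threshold beyond $E'<L^+(\gamma)$, and raising it only helps the admissibility constraint $\delta<(L^+(\gamma)-E')/2$, so your re-application at $E'$ is legitimate. What each approach buys: yours is softer, needs no further contact with the analysis, and would prove Theorem~\ref{thm.minimizing} in the same breath (since \ref{thm.robustustilovsky} also applies to an arbitrary local unstable manifold); the paper's proof stays inside one framework and exhibits the geometric mechanism directly --- below the critical area the relevant moduli space is empty --- without needing to vary the sublevel threshold.
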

This is a special case of Theorem \ref{thm.minimizing}.  \subsection {Outline of
   the argument} Our symplectic manifold $ (M, \omega) $ is assumed everywhere
to be monotone. We first construct, using a kind of parametric Floer
continuation map, a group-homomorphism (for $k>0$, otherwise just a set map):
\begin{equation*} \Psi _{\gamma}:   \pi _{k} ( \mathcal {P}_{\phi}, \mathcal {P}
   _{\phi,{E ^{\gamma}}}) \otimes \mathbb{Q} \to \mathbb{Q}, \end{equation*} for
$E ^{\gamma} $ sufficiently close to $L ^{+} (\gamma) $. Here and from now on $
\mathcal {P} _{\phi} $ denotes the component of the path space in the homotopy
class $ [\gamma ]$. As indicated the homomorphism depends on a particular
Ustilovsky geodesic $ \gamma$. The setup for Gromov-Witten theory needed in the
definition of $\Psi _{\gamma} $ is somewhat unusual, and we need to take time to
describe the class of almost complex structures needed for this. Let $ \mathcal
{C} $ denote the space of Hamiltonian connections on $ M \times \mathbb{C} $
satisfying the first pair of conditions in the Definition
\ref{definition.admissible} below.

For $ \mathcal {A} \in \mathcal {C} $ or $
\widetilde{\Omega} $ a coupling 2-form on $M \times \mathbb{C} $,
inducing a connection in $ \mathcal {C} $ define
\begin{align*}
 \area ( \mathcal {A}) &= \inf \{\int _{ \mathbb{C}} \alpha \, |
 \widetilde{\Omega} _{ \mathcal {A}} + \pi ^{*} (\alpha) \text { is
 nearly symplectic} \}, \\
 \area ( \widetilde{\Omega}) &= \inf \{\int _{ \mathbb{C}} \alpha \, |
 \widetilde{\Omega} + \pi ^{*} (\alpha) \text { is
 nearly symplectic} \}, 
\end{align*}  
where $ \widetilde{\Omega} _{ \mathcal {A}} $ is the coupling form inducing $
\mathcal {A} $, (\cite[Theorem 
6.21]{citeMcDuffSalamonIntroductiontosymplectictopology}) $\alpha $ a 2-form on $ \mathbb{C} $
and nearly symplectic means that \begin{equation} \label{eq.almostcomp}
(\widetilde{\Omega} _{ \mathcal {A}} + \pi ^{*} (\alpha)) ( \widetilde{v},
\widetilde{jv}) \geq 0,
\end{equation}
for $ \widetilde{v}, \widetilde{jv} $ horizontal lifts with respect to  $ \mathcal {A}
$, of $v, jv \in T _{z} \mathbb{C}$, for all $ z$. It is not hard to see that
the infinum is attained on a uniquely defined 2-form $\alpha _{ \mathcal {A}} $:
\begin{equation} \label{eq.alphaA} \alpha _{ \mathcal {A}} (v, w) = \max
_{M}  R _{ \mathcal {A}} (v,w),
\end{equation}
where $R _{ \mathcal {A}} $ is the Lie algebra valued curvature 2-form of $
\mathcal {A} $, and we are using the isomorphism $lie \text {Ham}(M,
\omega) \simeq C ^{\infty} _{norm} (M)$, see Section \ref{sec.prelim.coup}. By
assumptions this form has compact support. 
\begin{definition} \label{definition.admissible}  For a $0 < \delta < (L ^{+}
(\gamma) - E ^{\gamma})/2$ and a given $f: (B^k, \partial B^k) \to (\mathcal {P}
_{\phi}, \mathcal {P} _{\phi,{E ^{\gamma}}}) $, a family of Hamiltonian
connections $ \mathcal {A} _{b} $ on $ M \times \mathbb{C} $ is said to be 
$\delta$-\emph{\textbf{{admissible with respect to $f$}}}, if:
\begin{itemize} 
 \item Using the modified polar coordinates $(r,\theta)$, $0 \leq r < \infty$,
    $0\leq \theta \leq 1$, for $r \geq 2$ each $ \mathcal {A} _{b} $ is
flat and invariant under the dilation action of $ \mathbb{R} $ on $ \mathbb{C}
$.
\item The holonomy path $p ( \mathcal {A} _{b}) $  of $ \mathcal {A}
_{b} $ over the circles $ \{r\} \times S ^{1}$ is $f (b) $,  for $r \geq 2$.
\item $|\area ( \mathcal {A}_b) - L ^{+} (f(b))| \leq \delta$.
\end {itemize}
\end {definition}
We will say that $ \mathcal {A} $ is $\delta $-\emph{admissible} with respect to
$p \in \mathcal {P} _{\phi} $, if it satisfies the conditions above with
respect to $p $.
Fix a family $\{j _{r, \theta}\}$, on the
vertical tangent bundle $T ^{vert} (M \times \mathbb{C}) $,  invariant under the
dilation action of $ \mathbb{R} $ for $r>2 $, so that each $j _{r, \theta} $ is compatible
with $\omega $: $\omega (\cdot, j \cdot) >0 $, for $\cdot \neq 0 $.  Then a Hamiltonian
connection $ \mathcal {A} $ on $ M \times \mathbb{C} $ induces an almost 
complex structure $J ^{ \mathcal {A}} $ on $M \times
\mathbb{C} $ having the properties:
\begin{itemize} 
  \item Each $J ^{ \mathcal {A}}
$ coincides on the vertical tangent distribution of $M \times \mathbb{C} $ with
$ \{j _{r, \theta}\}$.
\item The projection map $\pi: M \times \mathbb{C} \to \mathbb{C}$ is $J _{
\mathcal {A}} $ holomorphic. 
\item $J ^{ \mathcal {A}}
$ preserves the $\mathcal{A}$-horizontal distribution on $M \times \mathbb{C}$. 
\end{itemize}  
(We don't specify $ \{j _{r, \theta}\} $ in the notation for $J ^{ \mathcal
   {A}} $, the dependence will be implicit). For a family $ \{ \mathcal {A}
_{b}\} $ $\delta $-admissible with respect to $f $, $b \in B ^{k} $, we
define
\begin{equation} \label{eq.jbr} j _{b,r,\theta} = \psi _{b, r, \theta} ^{*} j,
\end{equation}
\begin{equation} \label{eq.psib} \psi_b: M \times \mathbb{C}
   \to M \times \mathbb{C}, 
\end{equation} $$\psi _{b} (x, r, \theta) =
(\gamma _{\theta} \circ f _{b, \theta} ^{-1} x, r, \theta)   \mbox { for } r
\geq 2, $$ $$\psi _{b} (x, r, \theta) = (x, r, \theta) \mbox { for } r \leq 1,
$$ with $\psi _{b}$ for $1 <r <2 $ being an interpolation determined by the
contraction of the loop $\gamma _{ \theta} \circ f _{b, \theta} ^{-1} $,
$\theta \in S ^{1} $ of Hamiltonian diffeomorphism, which is obtained by
concatenating $f (p_b)$, where $p _{b}$ is a smooth geodesic path constant near
end points from $b $ to $0 $ in $B ^{k} $ (for the flat metric), with a fixed
smooth path $p_0$ also constant near end points, from $f (0)$ to $\gamma$ in
$\mathcal{P}_{\phi}$.

 Let $$ \overline{\mathcal {M}} ( \{J ^{ \mathcal {A}} _{b}\}) \equiv
\overline{\mathcal {M}} ( \{J ^{ \mathcal {A}} _{b}\}, o _{\max}, [\sigma
_{\max}]) $$ denote the compactified space of pairs $ (u,b) $, for $u$ a class $ [\sigma _{\max}]$, $J _{b}
$-holomorphic section of $M \times \mathbb{C} $, with $u|_{ \{r\} \times S ^{1}  }$ 
asymptotic as $r \mapsto \infty $ to the flat
section $o _{\max,b} \in CF ({p ( \mathcal {A} _b)})$. Where the generator, 
 $o
_{\max,b}$
 corresponds to $o _{\max} \in CF (\gamma) $ under the canonical identification
 of generators of $CF (p ( \mathcal {A} _b))$ with those of $CF (p ( \mathcal
 {A}_0))=CF (\gamma)$, via the action of the loop $\gamma _{\theta} \circ f _{b,
 \theta} ^{-1} $, $ \theta \in S ^{1} $ of Hamiltonian diffeomorphisms. This
 identification is in fact an isomorphism of chain complexes $$CF (p (
 \mathcal {A} _{b}),  \{j _{b,r,\theta} \} _{r, \theta}) \to CF (\gamma, j), $$
 see Section \ref{section.prelim.floer}. 
The class $
[\sigma _{\max}] $ is the class of the section $z \mapsto x _{\max} $, for 
$x _{\max} $ the maximizer of $H _{t} ^{\gamma} $ as before.
An element $ (u,b)$ is said to be in class $[\sigma _{\max}]
$ if  the ``section'' $ \psi _{b} \circ u $ asymptotic to $o
_{\max} $, is in the class $[\sigma _{\max}] $, where ``class'' is now
unambiguous.

 For this class the virtual
dimension of $ \overline{\mathcal {M}} ( \{J_b ^{ \mathcal {A}}\}) $ will be 0. 
The compactification is the classical compactification in Floer theory.
 Elements in the boundary of $\overline{\mathcal {M}} ( \{J _{b} ^{ \mathcal
 {A}}\})$, may have vertical bubbles lying in the fibers $M $ of the projection
 $M \times \mathbb{C} \to \mathbb{C} $, and or may have  breaks as in usual
 Floer theory. These breaks happen in the $r>2$, flat, dilation invariant part
 of $M \times \mathbb{C}$. Projecting the ``section'' to $M $ in the $r>2 $
 region we get the usual picture for breaking of Floer trajectories. We will not
 say much more on this as this part of classical Floer theory.
 
  Since $M $ is monotone and  since the expected dimension of the moduli
space is $0 $, we may regularize so that $ \overline{\mathcal {M}} ( \{J
^{ \mathcal {A}}_b\}) $ consists only of smooth
curves. However, we will have to  deal with breaking (but not bubbling)  when
studying deformations of the data $ \{ \mathcal {A} _{b}\}$.

The map $\Psi _{\gamma} $ is defined as the Gromov-Witten invariant 
 $$\int_{ \overline{ \mathcal {M}} ( \{J   
^{reg, \mathcal {A}}_{b}\})} 1.$$ The fact that
regularization is possible via perturbation of the family $ \{ \mathcal {A}
_{b}\} $ is not immediate but readily follows by \cite[Theorem
8.3.1]{citeMcDuffSalamon$J$--holomorphiccurvesandsymplectictopology}. This
is going to be of paramount importance for the main argument. 
\begin{remark}
We need monotonicity as opposed to semi-positivity, as we have to deal with
families of almost complex structures on $M \times \mathbb{C}$.  The analogous
condition of being semi-positive that would be necessary is that for a generic
(in parametric sense) $k$-family of almost complex structures on $M \times
\mathbb{C} $ there are no vertical holomorphic spheres in $M \times \mathbb{C}$
with negative Chern number. Clearly this condition becomes more restrictive as
$k$ increases, on the other hand monotonicity insures this for all $k$ at once.  
\end{remark}
\begin{remark} The monotonicity assumption is not due to avoidance
of the virtual moduli cycle, it appears to be rather necessary for the argument
to go through at all. 
\end{remark}
 \begin{lemma} \label{lemma.defined} $\Psi _{\gamma} ( [f])$ is
independent of the choice of the family $ \{ \mathcal {A} _{b}\}$ admissible
with respect to $f' $, and of $f' \in [f] $.
\end{lemma}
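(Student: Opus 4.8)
The plan is a standard parametrized cobordism argument, with the robustness hypothesis supplying the vanishing of the only delicate boundary terms. Both assertions of the lemma reduce to the single claim that, given two pieces of data $(\{\mathcal{A}^0_b\},f_0)$ and $(\{\mathcal{A}^1_b\},f_1)$ with $f_0,f_1\in[f]$ and $\{\mathcal{A}^i_b\}$ admissible (Definition \ref{definition.admissible}) with respect to $f_i$, the resulting counts $\int_{\overline{\mathcal{M}}(\{J^{reg,\mathcal{A}^0}_b\})}1$ and $\int_{\overline{\mathcal{M}}(\{J^{reg,\mathcal{A}^1}_b\})}1$ agree. First I would pick a smooth path $s\mapsto(\{\mathcal{A}^s_b\},f_s)$, $s\in[0,1]$, interpolating between the two, where each $f_s\colon(B^k,\partial B^k)\to(\mathcal{P}_\phi,\mathcal{P}_{\phi,E^\gamma})$ is a map of pairs --- the existence of a homotopy $f_0\simeq f_1$ through such maps is exactly the equality $[f_0]=[f_1]$ in $\pi_k(\mathcal{P}_\phi,\mathcal{P}_{\phi,E^\gamma})$ --- and each $\{\mathcal{A}^s_b\}$ is $\delta$-admissible with respect to $f_s$ for one fixed $\delta$ with $2\delta<L^+(\gamma)-E^\gamma$. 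Such a path exists routinely: for fixed $f$ the space of $\delta$-admissible families is nonempty (part of the construction of $\Psi_\gamma$), its members coincide for $r\geq 2$ (the flat dilation-invariant connection with holonomy path $f(b)$), and differ only by a fill-in over $r<2$ that may be deformed while keeping $\area$ close to $L^+\circ f$; enlarging $\delta$ slightly if needed one interpolates over $s$ rel endpoints. Along the path the canonical chain isomorphisms $CF(p(\mathcal{A}^s_b),\{j_{b,r,\theta}\})\cong CF(\gamma,j)$ carrying $o^s_{\max,b}$ to $o_{\max}$ vary continuously, so the moduli problem, the class $[\sigma_{\max}]$ and the asymptotic generator are defined uniformly in $s$.

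Next I would form the parametrized moduli space $\widehat{\mathcal{M}}=\bigcup_{s\in[0,1]}\overline{\mathcal{M}}(\{J^{\mathcal{A}^s}_b\},o_{\max},[\sigma_{\max}])$ of expected dimension $1$, arranged so that it restricts to the given regular data at $s=0,1$. Since $(M,\omega)$ is monotone, a generic such path makes $\widehat{\mathcal{M}}$ regular by perturbation within the admissible class alone, exactly as in the construction of $\Psi_\gamma$: a vertical holomorphic sphere has Chern number $\geq 1$, so a configuration carrying one lies in a component of expected dimension $\leq 1-2<0$ and cannot occur, which also rules out bubbling in the boundary of $\widehat{\mathcal{M}}$. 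Thus $\widehat{\mathcal{M}}$ is a compact oriented $1$-manifold whose boundary consists of the two endpoint slices $\overline{\mathcal{M}}(\{J^{\mathcal{A}^1}_b\})$ and $\overline{\mathcal{M}}(\{J^{\mathcal{A}^0}_b\})$, together with broken configurations produced by Floer breaking in the flat dilation-invariant region $r\geq 2$. Such a broken configuration is a $J^{\mathcal{A}^s}_b$-holomorphic section over $\mathbb{C}$ asymptotic to some generator $o'$, glued at $r=\infty$ to a Floer cylinder from $o'$ to $o_{\max}$; a dimension count forces both pieces to be rigid --- the section together with its $b$- and $s$-variations being $0$-dimensional and the unparametrized cylinder being $0$-dimensional, whence $CZ(o')=CZ(o_{\max})+1$ --- so these strata occur with total multiplicity $\sum_{o'}N_{o'}\cdot\langle d\,o',o_{\max}\rangle$, where $N_{o'}$ is the signed count of rigid sections in the family asymptotic to $o'$ and $d$ is the Floer differential of $CF(\gamma)$.

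Now the robustness hypothesis finishes the argument: $o_{\max}$ is semi-homologically essential in $CF(\gamma)$, i.e. $\langle d\,e,o_{\max}\rangle=0$ for every chain $e$, so $\langle d\,o',o_{\max}\rangle=0$ for all $o'$ and each broken boundary stratum contributes $0$. Hence $\partial\widehat{\mathcal{M}}=\overline{\mathcal{M}}(\{J^{\mathcal{A}^1}_b\})-\overline{\mathcal{M}}(\{J^{\mathcal{A}^0}_b\})$ as oriented $0$-manifolds, and counting points gives $\int_{\overline{\mathcal{M}}(\{J^{reg,\mathcal{A}^1}_b\})}1=\int_{\overline{\mathcal{M}}(\{J^{reg,\mathcal{A}^0}_b\})}1$, which is the assertion. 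I expect the main obstacle to be the compactness-and-gluing analysis of the broken ends over the non-compact base $\mathbb{C}$: one must verify that in the one-parameter family the only new codimension-one strata are exactly (section) $\sqcup$ (single Floer cylinder), with the standard gluing map near each, so that no unexpected phenomenon --- escape of area to the $r=\infty$ end, or motion of the breaking locus out of the $r\geq 2$ region --- intervenes; this is where the flat dilation-invariant normalization for $r\geq 2$ and the uniform $\delta$-admissibility bounds relating $\area(\mathcal{A}^s_b)$ to $L^+$ are used. A secondary point, already flagged in the remarks above, is that transversality must be obtained by perturbing only within the constrained admissible class rather than among all almost complex structures on $M\times\mathbb{C}$, which is exactly what monotonicity permits.
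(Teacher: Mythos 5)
Your overall scheme (interpolating family, one-dimensional parametrized cobordism, monotonicity to kill vertical bubbles, and robustness to kill the Floer-breaking boundary strata) matches the paper's proof, and your treatment of the broken ends --- rigid section asymptotic to a generator $o'$ with $CZ(o')=CZ(o_{\max})+1$ glued to a rigid Floer cylinder, contributing $\sum_{o'}N_{o'}\langle d\,o',o_{\max}\rangle=0$ by semi-homological essentiality --- is exactly the paper's argument. The existence of the interpolating admissible family is handled in the paper a bit more carefully (the space of $\delta$-admissible connections over each $f_t(b)$ is a contractible fiber of a Serre fibration over $B^k$, so sections exist and are connected by obstruction theory), but your sketch of that step is essentially the same idea.

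However, there is a genuine gap: you never rule out solutions $(u,b,t)$ with $b$ near $\partial B^k$. The parameter space is $B^k\times[0,1]$, a manifold with corners, so the regular parametrized moduli space is a $1$-manifold whose boundary includes not only the slices at $t=0,1$ and the broken configurations, but also points lying over $\partial B^k\times[0,1]$ (equivalently, ends where $b$ exits through $\partial B^k$). If such solutions existed, the signed counts at $t=0$ and $t=1$ would not have to agree, and robustness says nothing about them. The paper excludes them by the Hofer-geometric area estimate: for $b$ in a collar of $\partial B^k$ one has $f_t(b)\in\mathcal{P}_{\phi,E^\gamma+\delta}$, and for a section $u$ in class $[\sigma_{\max}]$ the coupling form pairs to give $\langle[\widetilde{\Omega}_{\mathcal{A},b,t}],[u]\rangle=-L^{+}(\gamma)$, so nonnegativity of $\widetilde{\Omega}_{\mathcal{A},b,t}+\pi^{*}\alpha_{\mathcal{A},b,t}$ on $J$-complex lines forces
\begin{equation*}
0\leq -L^{+}(\gamma)+\area(\mathcal{A}_{b,t})< -L^{+}(\gamma)+E^{\gamma}+2\delta<0,
\end{equation*}
a contradiction; this is precisely where the admissibility bound $|\area(\mathcal{A}_b)-L^{+}(f(b))|\leq\delta$ and the constraint $2\delta<L^{+}(\gamma)-E^{\gamma}$ are used (so your remark that one may ``enlarge $\delta$ slightly if needed'' is also dangerous --- $\delta$ must stay below this threshold). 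Without this step the cobordism argument does not close up, and it is the one place where the Hofer geometry of $f$, rather than standard Floer-theoretic machinery, enters the proof.
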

To prove this we first need to show that for a deformation $ \{ \mathcal {A}
_{b,t}\} $, $ 0 \leq t \leq 1 $, there are no elements $$ (u,b,t)  \in 
\overline{\mathcal {M}} ( \{J ^{ \mathcal {A}} _{b,t}\}), $$
for $b $ near $ \partial B ^{k} $. For this we need the special nature of the
class $ [\sigma _{\max}] $. 
\begin{proposition} \label{lemma.induced} For $f $ as before, there is an
assignment $f \mapsto \mathcal {A} _{b,f}$, where $ \{ \mathcal {A} _{b,f}\} $ is admissible with respect to $f $. 
Denote by $ \{J ^{ \mathcal {A}} _{b,f}\} $ the induced family
of almost complex structures, then for a local unstable manifold $f _{\gamma} $ at $\gamma
$, the space $ \overline{ \mathcal {M}} ( \{J ^{ \mathcal {A}} _{b,f
_{\gamma}}\}) $ consists of only one point: $(\sigma _{\max},0 )$. 
\end{proposition}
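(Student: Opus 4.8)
The plan is to build the assignment $f\mapsto\mathcal{A}_{b,f}$ by the standard efficient curvature-integration: on $r\geq 2$ take $\mathcal{A}_{b,f}$ flat and dilation-invariant with holonomy path $f(b)$, on $r\leq 1$ take it trivial, and on $1\leq r\leq 2$ let it develop the normalized generating Hamiltonians of $f(b)$ against a monotone radial cut-off, arranged so that by \eqref{eq.alphaA} one has $\area(\mathcal{A}_{b,f})=L^{+}(f(b))$ exactly. The key feature is that, since $\gamma=f_{\gamma}(0)$ is an Ustilovsky geodesic, the maximiser $x_{\max}$ of $H^{\gamma}_{t}$ is a critical point of $H^{\gamma}_{t}$ for every $t$, hence of every Hamiltonian occurring in the development of $\gamma$; therefore $x_{\max}$ is horizontal for $\mathcal{A}_{0,f_{\gamma}}$ over all of $\mathbb{C}$, so the constant section $\sigma_{\max}$ is $\mathcal{A}_{0,f_{\gamma}}$-flat, in particular $J^{\mathcal{A}}_{0}$-holomorphic and asymptotic to $o_{\max}$, and $(\sigma_{\max},0)\in\overline{\mathcal{M}}(\{J^{\mathcal{A}}_{b,f_{\gamma}}\})$.

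Next I would prove the energy identity. For $(u,b)\in\overline{\mathcal{M}}(\{J^{\mathcal{A}}_{b,f_{\gamma}}\})$, the energy $E(u)=\int_{\mathbb{C}}u^{*}(\widetilde{\Omega}_{\mathcal{A}_{b}}+\pi^{*}\alpha_{\mathcal{A}_{b}})$ is non-negative because $\widetilde{\Omega}_{\mathcal{A}_{b}}+\pi^{*}\alpha_{\mathcal{A}_{b}}$ is nearly symplectic in the sense of \eqref{eq.almostcomp} and $u$ is $J^{\mathcal{A}}_{b}$-holomorphic; and since this form is closed and the asymptotics at the nondegenerate orbit $o_{\max}$ are fixed, $E(u)=\langle[\widetilde{\Omega}_{\mathcal{A}_{b}}],[\sigma_{\max}]\rangle+\area(\mathcal{A}_{b})$. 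The pairing $\langle[\widetilde{\Omega}_{\mathcal{A}_{b}}],[\sigma_{\max}]\rangle$ is an invariant of the asymptotic orbit (this is where ``the special nature of the class $[\sigma_{\max}]$'' is used, via the chain-complex identification of Section \ref{section.prelim.floer}), and is therefore independent of $b$; evaluating it on the flat section $\sigma_{\max}$ at $b=0$, where $E(\sigma_{\max})=0$ and $\area(\mathcal{A}_{0,f_{\gamma}})=L^{+}(\gamma)$, gives $\langle[\widetilde{\Omega}_{\mathcal{A}_{b}}],[\sigma_{\max}]\rangle=-L^{+}(\gamma)$. Hence
\[
E(u)=\area(\mathcal{A}_{b,f_{\gamma}})-L^{+}(\gamma)=L^{+}(f_{\gamma}(b))-L^{+}(\gamma).
\]

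Finally, by Definition \ref{def.unstable} the function $b\mapsto L^{+}(f_{\gamma}(b))$ attains a strict maximum equal to $L^{+}(\gamma)$ at $b=0$, so $E(u)<0$ whenever $b\neq 0$. This is impossible for a $J^{\mathcal{A}}_{b}$-holomorphic curve, and equally for any element of the Gromov compactification, all of whose components carry non-negative energy summing to $E(u)$. Thus every element of $\overline{\mathcal{M}}(\{J^{\mathcal{A}}_{b,f_{\gamma}}\})$ lies over $b=0$, where $E(u)=0$; this kills all vertical bubbling and Floer breaking, and a zero-energy $J^{\mathcal{A}}_{0}$-holomorphic section is $\mathcal{A}_{0,f_{\gamma}}$-flat, hence determined by its asymptotic value, so (since $x_{\max}$ is fixed by every holonomy of $\mathcal{A}_{0,f_{\gamma}}$) it must be $\sigma_{\max}$. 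Therefore $\overline{\mathcal{M}}(\{J^{\mathcal{A}}_{b,f_{\gamma}}\})=\{(\sigma_{\max},0)\}$.

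The main obstacle is the bookkeeping of the second paragraph together with the construction in the first: one must make the development efficient enough that $\area(\mathcal{A}_{b,f})$ equals $L^{+}(f(b))$ on the nose (not merely within the slack $\delta$ allowed by Definition \ref{definition.admissible}) while still meeting all clauses of that definition and keeping $x_{\max}$ horizontal at $b=0$, and one must verify that the coupling-form pairing on $[\sigma_{\max}]$ is genuinely a $b$-independent invariant under the identifications used to define the class $[\sigma_{\max}]$ for $b\neq 0$ — this sharp matching of $\area$ with $L^{+}$ is exactly the point where Hofer geometry forces the rigidity of $\sigma_{\max}$. Monotonicity of $(M,\omega)$ is used only to ensure that the compactification of the $k$-family contains no vertical holomorphic spheres of non-positive Chern number, so that the energy accounting is valid; the virtual dimension and regularity claims for $\overline{\mathcal{M}}$ are as recalled in the outline.
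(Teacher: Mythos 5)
Your proposal is correct and follows essentially the same route as the paper: the same cut-off construction $\widetilde{\Omega}_{b,f}=\omega-d(\eta H_b\,d\theta)$ with $\area(\mathcal{A}_{b,f})=L^{+}(f(b))$, the same nonnegativity-of-energy pairing $\langle[\widetilde{\Omega}_{\mathcal{A}_b}+\pi^*\alpha_{\mathcal{A}_b}],[u]\rangle = L^{+}(f(b))-L^{+}(\gamma)$ forcing $b=0$, and the same zero-energy rigidity at $b=0$. The only cosmetic difference is that at $b=0$ you conclude $u=\sigma_{\max}$ from horizontality plus uniqueness of horizontal lifts with the prescribed asymptotics, whereas the paper notes that the residual term $(\max_M H_0-H_0)\,d\eta\wedge d\theta$ forces the horizontal section through $x_{\max}$; these are interchangeable.
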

\begin{theorem} \label{thm.robustustilovsky} Let $\gamma $ be an
Ustilovsky geodesic, s.t. the associated real linear Cauchy-Riemann operator
($rest$ stands for restricted: the full operator has $T_0B $ as a summand for
domain):
\begin{equation*} D  ^{rest}_{\sigma_{\max}}: \Omega ^{0} ( \sigma _{\max} ^{*}
T ^{vert} (M \times \mathbb{C}))  \to \Omega ^{0,1} (\sigma
_{\max} ^{*} T ^{vert} (M \times \mathbb{C})), 
\end{equation*} 
has no kernel. Let $f_{\gamma}:  (B^k, \partial B^k) \to
(\mathcal {P} _{\phi}, \mathcal {P} _{\phi,  E ^{\gamma}})$ be a local unstable
manifold at $\gamma $, then there is an admissible family $ \{ \mathcal {A}
_{b}\}$, $\mathcal{A} _{0} = \mathcal{A} _{0, f _{\gamma} }   $, such that 
$\overline{ \mathcal {M}} (\{J
^{\mathcal {A}} _{b}\}) $ consists only of $(\sigma _{\max},0 )$ and this
element is regular.
 And so if $\gamma$ is in addition robust then
 \begin{equation*} 0 \neq [f _{\gamma}] \in  \pi _{k} ( \mathcal {P} _{\phi},
 \mathcal {P} _{\phi,  E ^{\gamma}}).
 \end{equation*} 
\end{theorem}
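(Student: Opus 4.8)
The plan is to keep the single solution $(\sigma_{\max},0)$ produced by Proposition~\ref{lemma.induced}, perturb the admissible family in the $B^k$-directions near $b=0$ to make that solution regular without creating new ones, and then read off $[f_\gamma]\neq 0$ from the resulting count being $\pm1$. First I would pin down the parametric linearization at $(\sigma_{\max},0)$ for the assigned family $\mathcal{A}_{b,f_\gamma}$ — for which $\overline{\mathcal{M}}(\{J^{\mathcal{A}}_{b,f_\gamma}\})=\{(\sigma_{\max},0)\}$ by Proposition~\ref{lemma.induced}: it has the form $(\xi,v)\mapsto D^{rest}_{\sigma_{\max}}v+\mathcal{L}(\xi)$ on $T_0B^k\oplus\Omega^0(\sigma_{\max}^*T^{vert}(M\times\mathbb{C}))$, where $\mathcal{L}\colon T_0B^k\to\Omega^{0,1}(\sigma_{\max}^*T^{vert}(M\times\mathbb{C}))$ is the $b$-derivative at $0$ of $\bar\partial_{J^{\mathcal{A}}_b}$ applied to $\sigma_{\max}$. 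Since the virtual dimension of $\overline{\mathcal{M}}$ is $0$ and $B^k$ contributes $k$ to the index, $\ind D^{rest}_{\sigma_{\max}}=-k$; equivalently $\ind D^{rest}_{\sigma_{\max}}=CZ(o_{\max})-CZ([M])$ by the proof of Theorem~\ref{thm.indexUst}. As $\ker D^{rest}_{\sigma_{\max}}=0$ by hypothesis, $\coker D^{rest}_{\sigma_{\max}}$ is exactly $k$-dimensional, so the full operator is onto exactly when the induced $\overline{\mathcal{L}}\colon T_0B^k\to\coker D^{rest}_{\sigma_{\max}}$ is onto, i.e.\ (dimensions agreeing) an isomorphism; in that case the full operator is an isomorphism and $(\sigma_{\max},0)$ is regular.

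So it remains to arrange the first $b$-variation of the family at $b=0$ so that $\overline{\mathcal{L}}$ is an isomorphism, and this is the step I expect to be the real obstacle. There is ample freedom: over the compact region $r<2$ the Hamiltonian connection, and hence $J^{\mathcal{A}}$, may be varied essentially arbitrarily — the holonomy, flatness and dilation-invariance constraints only bind for $r\ge 2$, the area constraint leaves an open margin, and keeping $\mathcal{A}_0=\mathcal{A}_{0,f_\gamma}$ costs nothing since only the first derivative in $b$ is at issue. A nonzero element of $\coker D^{rest}_{\sigma_{\max}}$ lies in the kernel of the formal adjoint, hence cannot vanish identically on $\{r<2\}$ by unique continuation, so the standard transversality argument (as in \cite[Ch.~3]{citeMcDuffSalamon$J$--holomorphiccurvesandsymplectictopology}, cf.\ \cite[Theorem 8.3.1]{citeMcDuffSalamon$J$--holomorphiccurvesandsymplectictopology}) yields variations of the family whose $\mathcal{L}$ collectively span the $k$-dimensional cokernel. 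This is precisely where the hypothesis does its work: parametrically one cannot in general destroy a nonzero $\ker D^{rest}_{\sigma_{\max}}$ without pushing $\sigma_{\max}$ off the moduli space or producing extra solutions, whereas once the kernel vanishes the cokernel has exactly the dimension $k=\dim B^k$ that a generic choice of family absorbs — the automatic-transversality mechanism the paper advertises. Fix such a $\{\mathcal{A}_b\}$, $C^\infty$-close to $\mathcal{A}_{b,f_\gamma}$ and still admissible with respect to $f_\gamma$.

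That the moduli space is still exactly $\{(\sigma_{\max},0)\}$ is Gromov compactness: for a small enough perturbation, any element of $\overline{\mathcal{M}}(\{J^{\mathcal{A}}_b\})$, with all of its break and bubble components, lies close to an element of $\overline{\mathcal{M}}(\{J^{\mathcal{A}}_{b,f_\gamma}\})=\{(\sigma_{\max},0)\}$, which is smooth, unbroken and now transverse, so every element is a smooth unbroken section near $\sigma_{\max}$ over $b$ near $0$, and the implicit function theorem leaves exactly one. Monotonicity excludes vertical spheres of nonpositive Chern number, and the special (area-minimal among section classes through a point of $M$) nature of $[\sigma_{\max}]$ excludes Floer breaking in the flat region, so no degeneration occurs. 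This proves the first assertion.

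Finally, assume in addition that $\gamma$ is robust. Then $\widetilde{\phi}$ is Floer non-degenerate and $o_{\max}$ is semi-homologically essential, so by Lemma~\ref{lemma.defined} the quantity $\Psi_\gamma([f])=\int_{\overline{\mathcal{M}}(\{J^{reg,\mathcal{A}}_b\})}1$ is a well-defined function of $[f]\in\pi_k(\mathcal{P}_\phi,\mathcal{P}_{\phi,E^\gamma})\otimes\mathbb{Q}$ (for $k=0$, of the pointed set $\pi_0$). Evaluated on the family just constructed it equals the orientation sign $\pm1$ of the unique regular point $(\sigma_{\max},0)$, hence is nonzero; but if $[f_\gamma]$ were zero in $\pi_k(\mathcal{P}_\phi,\mathcal{P}_{\phi,E^\gamma})$ it would be zero after tensoring with $\mathbb{Q}$ (respectively, the basepoint), forcing $\Psi_\gamma([f_\gamma])=0$, a contradiction. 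Therefore $0\neq[f_\gamma]\in\pi_k(\mathcal{P}_\phi,\mathcal{P}_{\phi,E^\gamma})$, as claimed.
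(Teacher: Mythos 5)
Your reduction of regularity to surjectivity of $\overline{\mathcal{L}}\colon T_0B^k\to \mathrm{coker}\, D^{rest}_{\sigma_{\max}}$ matches the paper, but the step where you keep the moduli space equal to $\{(\sigma_{\max},0)\}$ after perturbing is a genuine gap, and it is exactly the point the paper's construction is designed to handle. Your argument is ``small perturbation $+$ Gromov compactness $+$ implicit function theorem leaves exactly one solution.'' That reasoning fails because $(\sigma_{\max},0)$ is a \emph{degenerate} zero of the unperturbed Fredholm section whenever $k>0$ (the full index-$0$ operator for $\{\mathcal{A}_{b,f_\gamma}\}$ has cokernel, hence kernel): a degenerate zero can split into several zeros under arbitrarily small perturbations (think of $x^2$ perturbed to $x^2+\epsilon x$), all clustering at $(\sigma_{\max},0)$, so Gromov compactness gives no contradiction; and the uniqueness neighborhood supplied by the implicit function theorem at the perturbed solution shrinks with the perturbation (the inverse of the linearization blows up as the regularizing term is scaled down), so ``transverse after perturbation'' does not yield ``unique nearby.'' Since the local count near a degenerate zero is a local degree that need not be $\pm1$, your construction does not establish either the first assertion or that $\Psi_\gamma([f_\gamma])\neq0$.

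The paper closes this hole not by softness but by Hofer geometry: it perturbs only within the special class $\mathcal{H}$ of coupling forms $\widetilde{\Omega}_{\mathcal{A}_p}+d(G\,d\theta)$ with $G=\zeta(r)K$ supported in $\{1+\kappa\le r\le 2-2\kappa\}$ and $K(x_{\max})=0$. Lemma \ref{lemma.crit} shows that for any such family with $\mathcal{A}_0=\mathcal{A}_\gamma$ the point $b=0$ remains a critical point of $b\mapsto \area(\mathcal{A}_b)$; choosing the family $C^\infty$-close to $\{\mathcal{A}_{b,f_\gamma}\}$ keeps this function Morse with unique maximum at $0$. Then the positivity identity of Proposition \ref{lemma.induced}, $0\le\langle[\widetilde{\Omega}+\pi^*\alpha],[u]\rangle=-L^+(\gamma)+\area(\mathcal{A}_b)$, excludes \emph{exactly} (not just in a limit) every solution with $b\neq0$, and since $\mathcal{A}_0=\mathcal{A}_\gamma$ is untouched the horizontality argument leaves only $\sigma_{\max}$ over $b=0$; surjectivity of $\overline{\mathcal{L}}$ is achieved inside this restricted class via the universal-moduli argument of \cite[Theorem 8.3.1]{citeMcDuffSalamon$J$--holomorphiccurvesandsymplectictopology}. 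In other words, the regularizing perturbation is engineered so that it ``does not change the $0$ locus,'' which is the automatic-transversality mechanism; your unrestricted perturbations over $\{r<2\}$ destroy the area control that makes this exact exclusion possible. The final deduction of $0\neq[f_\gamma]$ from Lemma \ref{lemma.defined} is fine once the count $\pm1$ is actually established.
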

The first half of the statement is the ``automatic transversality'',  
although the term is used in a somewhat looser sense than usual. The point
is that the full real linear CR operator with domain 
$\Omega ^{0} ( \sigma _{\max} ^{*} T
^{vert} (M \times \mathbb{C})) \oplus T_0 B $ may still have kernel on the $T_0
B $ component (and hence cokernel as the index is 0), but any such kernel is
``removable'' in the sense that there is a regularizing Fredholm perturbation of the Cauchy-Riemann section,
which does not change the 0 locus.  
The above theorem is the main ingredient for Theorem \ref{thm.virtual}.
\begin{proposition} \label{example.satisfied} For $\gamma $ as in
\ref{thm.special.case}, by the proof of \ref{thm.virtual}  the condition on the CR operator is always satisfied
for an almost complex structure $j$ on $M$ integrable and invariant under the action of $\gamma$ in a neighborhood of $x _{\max}$, and admitting a Kahler chart to $\mathbb{C} ^{n}$ at $x _{\max}$.  
\end {proposition}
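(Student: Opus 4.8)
The plan is to use the integrability of $j$ near $x_{\max}$ to split $\sigma_{\max}^{*}T^{vert}(M\times\mathbb{C})$ holomorphically into line bundles of negative degree, which makes the vanishing of $\ker D^{rest}_{\sigma_{\max}}$ immediate. First I would fix the data. Since $\gamma$ restricts a Hamiltonian circle action and $x_{\max}$ is the unique, non-degenerate maximum of the generating function $H^{\gamma}_{t}$, it is a fixed point of the action; by Bochner linearization at this fixed point one may choose a compatible $j$ which near $x_{\max}$ is invariant, integrable, and identifies a neighbourhood of $x_{\max}$ with a ball in $(\mathbb{C}^{n},j_{\mathrm{std}},\omega_{\mathrm{std}})$ on which the action is linear and unitary, and then extend $j$ arbitrarily over $M$. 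In such a chart the path $t\mapsto d(\gamma_{t})_{x_{\max}}$ lies in a fixed maximal torus of $U(n)$ with rotation numbers $w_{1},\dots,w_{n}$; these satisfy $w_{j}<0$ because $x_{\max}$ is a maximum of $H^{\gamma}_{t}$, and $w_{j}\notin\mathbb{Z}$ because $o_{\max}$ is non-degenerate. (In the semi-free situation of Example \ref{ex.main} the $w_{j}$ are all equal.)

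Next I would identify the operator. For $\mathcal{A}_{0}=\mathcal{A}_{0,f_{\gamma}}$ one may take $\psi_{0}=\mathrm{id}$, so by \eqref{eq.jbr} the vertical almost complex structure along $\sigma_{\max}$ is the fixed $j$, and $\sigma_{\max}$ is $J^{\mathcal{A}_{0}}$-holomorphic by Proposition \ref{lemma.induced}. Over the region $r\ge 2$ the connection $\mathcal{A}_{0}$ is flat with linearized holonomy in the above torus, and since $j$ is integrable there $D^{rest}_{\sigma_{\max}}$ is, in that region, the $\bar{\partial}$-operator of $\sigma_{\max}^{*}T^{vert}(M\times\mathbb{C})$ for this diagonal flat structure. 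Completing the bundle over the compactified domain $\mathbb{CP}^{1}$ using the asymptotic flat section $o_{\max}$ at $r=\infty$, it should split holomorphically as $\bigoplus_{j=1}^{n}L_{j}$ with $D^{rest}_{\sigma_{\max}}=\bigoplus_{j}\bar{\partial}_{L_{j}}$, and a winding count should give $\deg L_{j}=\lfloor w_{j}\rfloor$. This cross-checks with indices: by Theorem \ref{thm.indexUst} the Morse index of $\gamma$ is $k=-2\sum_{j}\lceil w_{j}\rceil$, and since the full operator including the $T_{0}B$ summand has index $0$, the identity $\sum_{j}2(\deg L_{j}+1)=-k$ forces $\sum_{j}\deg L_{j}=\sum_{j}(\lceil w_{j}\rceil-1)=\sum_{j}\lfloor w_{j}\rfloor$, matching the termwise formula.

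Finally, since each $w_{j}$ is a negative non-integer, $\deg L_{j}=\lfloor w_{j}\rfloor\le-1$, hence $\ker\bar{\partial}_{L_{j}}=H^{0}(\mathbb{CP}^{1},L_{j})=0$ and therefore $\ker D^{rest}_{\sigma_{\max}}=\bigoplus_{j}\ker\bar{\partial}_{L_{j}}=0$. This is exactly the hypothesis of Theorem \ref{thm.robustustilovsky}, which combined with Example \ref{ex.main} (supplying robustness of such $\gamma$) gives the proposition, and hence Theorem \ref{thm.special.case}. The step I expect to be the main obstacle is the identification above: passing rigorously from the Fredholm problem on the non-compact domain $\mathbb{C}$ with the asymptotic constraint at $o_{\max}$ to an honest $\bar{\partial}$-operator on a holomorphic vector bundle over $\mathbb{CP}^{1}$, and pinning down both the holomorphic splitting and the sign conventions making $\deg L_{j}=\lfloor w_{j}\rfloor$ with $w_{j}<0$; the index bookkeeping keeps the conventions honest, but the splitting itself genuinely uses the integrability (and invariance) of $j$ near $x_{\max}$, which is precisely why it must be built into the choice of $j$.
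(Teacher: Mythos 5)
Your argument reaches the right conclusion, but it is a genuinely different route from the paper's. The paper's proof is soft: under your hypotheses the linearized action at $x_{\max}$ is a homomorphism $S^{1}\to U(n)$, and the proof of Theorem \ref{thm.virtual} (via the Atiyah--Bott integrability of the induced structure on the normal bundle) shows that a neighborhood of the zero section of the now holomorphic normal bundle of $\sigma_{\max}$ is biholomorphic to a neighborhood of $\sigma_{\max}$ in $(M\times\mathbb{C},J^{\mathcal{A}_{\gamma}})$; a nonzero element $\xi\in\ker D^{rest}_{\sigma_{\max}}$ would then give, for small $\epsilon>0$, a holomorphic section $\epsilon\cdot\xi$ in class $[\sigma_{\max}]$ distinct from $(\sigma_{\max},0)$, contradicting the uniqueness of $(\sigma_{\max},0)$ in $\overline{\mathcal{M}}(\{J^{\mathcal{A}}_{b,f_{\gamma}}\})$ established by the Hofer-geometric area estimate of Proposition \ref{lemma.induced}. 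So the paper never touches weights, degrees, or sign conventions: the vanishing of the kernel is extracted from the same ``automatic transversality via Hofer geometry'' mechanism driving the whole paper, which is why the proposition is phrased as following ``by the proof of \ref{thm.virtual}''. Your proof instead diagonalizes the linearized circle action, splits $\sigma_{\max}^{*}T^{vert}(M\times\mathbb{C})$, completed over $\mathbb{CP}^{1}$ via the asymptotic trivialization at $o_{\max}$, into line bundles of degree determined by the non-integer weights (all of one sign at the maximum), and concludes $H^{0}=0$; this is the classical automatic-regularity computation for the maximal section familiar from Seidel-element arguments. It buys an explicit description of the normal bundle, but carries exactly the burden you flag: rigorously passing from the asymptotic Fredholm problem on $\mathbb{C}$ to a $\bar{\partial}$-operator on $\mathbb{CP}^{1}$ and establishing the degree formula $\deg L_{j}=\lfloor w_{j}\rfloor$; your index cross-check against Theorem \ref{thm.indexUst} does pin down the uniform convention (and the fact that all weights at the maximum share a sign then forces every $\deg L_{j}\le -1$), so the remaining gap is standard, if nontrivial, analysis rather than a flaw of substance. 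In short: your route is computational and self-contained at the linear level, the paper's trades all that bookkeeping for the already-proved moduli-space uniqueness.
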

\begin{proof}

 Pulling back the  $\gamma$ action to $\mathbb{R} ^{2n}$ by the Kahler chart at
 $x _{\max}$,
we get an action of $S ^{1}$ on a neighborhood $U$ of $0$ in $\mathbb{C}^{n}$ which preserves the standard complex structure and symplectic form, hence is an action by complex isometries of $U$ fixing $0 \in U$. Since such an isometry is linear, this determines a homomorphism $S ^{1} \to U (n)$.  The proof of \ref{thm.virtual} in this case gives that the normal bundle of $\sigma _{\max}$ is naturally holomorphic and a neighbhorhood of the 0-section is biholomorphic to a neighborhood of $\sigma _{\max}$ in $M \times \mathbb{C}$ with respect to the almost complex structure $J _{\mathcal{A}_{\gamma}}$. Completing the proof \ref {thm.virtual} we get the desired claim. 

\end{proof}
 \begin{theorem} \label{thm.minimizing} Under the conditions of theorem
\ref{thm.robustustilovsky} above, the local unstable manifold $$f _{\gamma}: (B^k, \partial B^k) \to (\mathcal {P} _{\phi},
\mathcal {P} _{\phi, E ^{\gamma}})$$ realizes the minimum in the definition of 
the semi-norm:
\begin{equation} | \cdot|: \pi _{k} ( \mathcal {P} _{\phi},
\mathcal {P} _{\phi, {E ^{\gamma}}}) \to \mathbb{R} ^{+},
\end{equation} 
given by $$| [f]| = \inf _{f' \in [f]} \max _{b \in B^k} L ^{+} (f' (b)). $$
\end{theorem}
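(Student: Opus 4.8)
The plan is to play off any competitor $f'$ in the relative homotopy class $[f_\gamma]\in\pi_k(\mathcal{P}_\phi,\mathcal{P}_{\phi,E^\gamma})$ against $f_\gamma$ itself by means of the count $\Psi_\gamma$. First observe that, since $f_\gamma$ is a local unstable manifold, $f_\gamma^*L^+$ has its unique maximum at $0$ with value $L^+(\gamma)$ and satisfies $L^+\le E^\gamma<L^+(\gamma)$ on $\partial B^k$; hence $\max_{b\in B^k}L^+(f_\gamma(b))=L^+(\gamma)$ and $f_\gamma$ is itself an admissible competitor for the infimum defining $|[f_\gamma]|$. So it suffices to prove the matching lower bound $\max_{b\in B^k}L^+(f'(b))\ge L^+(\gamma)$ for every $f'\in[f_\gamma]$. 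By Theorem \ref{thm.robustustilovsky} the moduli space attached to the distinguished admissible family $\mathcal{A}_{b,f_\gamma}$ is the single regular point $(\sigma_{\max},0)$, so its Gromov--Witten count is $\pm1$; by Lemma \ref{lemma.defined} this count equals $\Psi_\gamma([f'])\neq0$ for every $f'\in[f_\gamma]$.

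Fix such an $f'$ and a small $\delta>0$, and choose a $\delta$-admissible family $\{\mathcal{A}_b\}$ with respect to $f'$ (these exist, by the construction preceding Proposition \ref{lemma.induced}) which is moreover generic, so that, by monotonicity and the expected dimension being $0$, $\overline{\mathcal{M}}(\{J^{\mathcal{A}}_b\})$ is a finite set of smooth regular sections; a $C^\infty$-small perturbation changes $\area(\mathcal{A}_b)=\int_{\mathbb{C}}\alpha_{\mathcal{A}_b}$ by an arbitrarily small amount, so the third clause of Definition \ref{definition.admissible} survives after renaming $\delta$. Since $\Psi_\gamma([f'])\neq0$ the moduli space is non-empty: there is a $J^{\mathcal{A}_{b_0}}$-holomorphic section $u$ over some $b_0\in B^k$ with $\psi_{b_0}\circ u$ asymptotic to $o_{\max}$ and lying in the class $[\sigma_{\max}]$.

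The crux is the energy estimate. As $u$ is $J^{\mathcal{A}_{b_0}}$-holomorphic and $J^{\mathcal{A}_{b_0}}$ is tamed by the nearly symplectic form $\widetilde{\Omega}_{\mathcal{A}_{b_0}}+\pi^{*}\alpha_{\mathcal{A}_{b_0}}$, its energy is non-negative, and since $u$ is a section (so $\pi\circ u=\mathrm{id}$) and $\alpha_{\mathcal{A}_{b_0}}$ has compact support,
$$0\le E(u)=\big\langle[\widetilde{\Omega}_{\mathcal{A}_{b_0}}],[\sigma_{\max}]\big\rangle+\int_{\mathbb{C}}\alpha_{\mathcal{A}_{b_0}}=\big\langle[\widetilde{\Omega}_{\mathcal{A}_{b_0}}],[\sigma_{\max}]\big\rangle+\area(\mathcal{A}_{b_0}).$$
Now I would compute the first term. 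Because $\mathcal{A}_{b_0}$ is flat and dilation invariant for $r\ge2$, the pairing $\langle[\widetilde{\Omega}_{\mathcal{A}_{b_0}}],[\sigma_{\max}]\rangle$ is, by Stokes, the same as for the constant section at $x_{\max}$, namely $-\int_0^1H^{\gamma}_t(x_{\max})\,dt$; since $x_{\max}$ is the unique maximizer of $H^{\gamma}_t$ at each $t$ and $H^{\gamma}$ is normalized, this equals $-L^{+}(\gamma)$. (This is exactly where monotonicity is used: it pins down the $\widetilde{\Omega}$-area of the specific class $[\sigma_{\max}]$, any other section class asymptotic to $o_{\max}$ differing from it by a sphere class of strictly positive $\omega$-area.) Hence $\area(\mathcal{A}_{b_0})\ge L^{+}(\gamma)$, and by admissibility $\max_b L^{+}(f'(b))\ge L^{+}(f'(b_0))\ge\area(\mathcal{A}_{b_0})-\delta\ge L^{+}(\gamma)-\delta$ (in particular, since $\delta<(L^{+}(\gamma)-E^{\gamma})/2$, this also forces $b_0\notin\partial B^k$, consistent with Proposition \ref{lemma.induced}). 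Letting $\delta\to0$ for the fixed $f'$ gives $\max_b L^{+}(f'(b))\ge L^{+}(\gamma)$, whence $|[f_\gamma]|=\max_b L^{+}(f_\gamma(b))$ as claimed.

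The main obstacle is the identity $E(u)=\area(\mathcal{A}_{b_0})-L^{+}(\gamma)$, i.e.\ identifying $\langle[\widetilde{\Omega}_{\mathcal{A}_{b_0}}],[\sigma_{\max}]\rangle$ with $-L^{+}(\gamma)$ uniformly in $b_0$; most of this bookkeeping is already contained in the area/curvature discussion preceding Definition \ref{definition.admissible} and in the identification of Floer complexes via the $\psi_b$, but one must check that the twisting $\psi_b$ and the interpolation in its definition do not shift this pairing, and that the class $[\sigma_{\max}]$ — rather than a larger section class — is the one actually supporting a holomorphic section, which is precisely where monotonicity together with $\Psi_\gamma([f'])\neq0$ is invoked. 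The remaining steps (existence of admissible families for an arbitrary $f'\in[f_\gamma]$, genericity with the expected dimension $0$, and the $\delta\to0$ limit) are routine given the machinery already set up for $\Psi_\gamma$.
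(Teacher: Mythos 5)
Your argument is correct and is essentially the paper's own proof, just unwound: the paper phrases it as a contradiction (if some $f'\in[f_\gamma]$ had $\max_b L^+(f'(b))<L^+(\gamma)$, the energy estimate \eqref{eq.calculation} from the proof of Lemma \ref{lemma.defined} would force $\overline{\mathcal M}(\{J^{\mathcal A}_{f',b}\})=\emptyset$, contradicting the nonzero count from Theorem \ref{thm.robustustilovsky}), while you run the same estimate directly and let $\delta\to0$. The only quibble is your parenthetical attributing the identity $\langle[\widetilde\Omega_{\mathcal A_{b_0}}],[u]\rangle=-L^+(\gamma)$ to monotonicity: this is just the coupling-form/action identity \eqref{eq.couplingequality} applied to the fixed class $[\sigma_{\max}]$ built into the moduli problem, monotonicity being used only for regularization.
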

The proof of Theorem \ref{thm.virtual} proceeds by
constructing $\gamma' $ from $\gamma $ satisfying the condition on the CR
operator.
 \section {Acknowledgements} I am particularly grateful to Yael Karshon who
 motivated me to pursue this topic over a number of years.  Egor
 Shelukhin for related discussions and Leonid Polterovich for helping at the
 conception state and who explained to me in particular some general aspects of
 calculus of variations. Michael Usher for helpful input as well as the
 referee for careful reading. 
 \section{Some preliminaries and notation}
 \subsection{Floer chain complex} \label{section.prelim.floer}
Classically, the generators of the Hamiltonian Floer chain complex associated to
 \begin{equation*} H: M \times S ^{1}  \to
\mathbb{R},
\end{equation*}
 are pairs $ (o, \overline{o}) $, with $o $ a time 1 periodic orbit of the
Hamiltonian flow generated by $H $, and $ \overline{o} $ a homotopy class of a
disk bounding the orbit. The function $H $ determines a Hamiltonian connection $ \mathcal {A} _{H}
$ on the bundle $M \times S ^{1} \to S ^{1}$. The horizontal spaces for $
\mathcal {A} _{H} $ are the $ \widetilde{\Omega} _{H} $
orthogonal spaces to the vertical tangent spaces of $M \times S ^{1} $, where 
\begin{equation*} \widetilde{\Omega} _{ H} = \omega - d (Hd \theta).  
\end{equation*}
To remind the reader our convention for the Hamiltonian flow is:
\begin{equation*}  \omega (X _{t}, \cdot)=-dH _{t} (
\cdot).
\end{equation*}
The horizontal (a.k.a flat) sections for $
\mathcal {A} _{H} $ correspond to periodic orbits of $H $, in the obvious way. The homotopy classes $ \overline{o}
$, induce homotopy classes of bounding disks in $M \times D ^{2} $ of the
corresponding flat sections. The connection $ \mathcal {A} _{H} $, induces an obvious $ \mathbb{R}
$-translation invariant connection on $M \times \mathbb{R} \times S ^{1} $, trivial in the $
\mathbb{R} $ direction. For an $ \mathbb{R}$ translation invariant family $
\{j _{\theta}\}$ of almost complex structures on the vertical tangent bundle of
$M \times \mathbb{R} $, we have an induced almost complex structure $J
_{H} $ on $ M \times \mathbb{R} \times S ^{1}$ as explained in the introduction. 
The differential in the classical Hamiltonian Floer chain complex is obtained via count of $J _{H}$-holomorphic sections $u$ of $M
\times \mathbb{R} \times S ^{1} \to \mathbb{R} \times S ^{1}$, whose
projections to $M \times S ^{1}$ are asymptotic in backward,  forward time to
generators $ (o_-, \overline{o} _{-}) $, respectively $(o_+, \overline{o} _{+}) 
$ of $CF(H)$, such that $ \overline{o}_- + [u] - \overline{o}_+ =0 \in \pi_2
(M \times S ^{2})$, with the obvious interpretation of this equation, and such
that $$CZ (o_+, \overline{o} _{+} ) - CZ (o_-, \overline{o} _{-}) = 1.$$ We
shall omit further details. The corresponding chain complex will be denoted 
by $CF (p, \{j _{\theta}\}) $, if $p$ is the path from $id$ to $\phi \in \text
{Ham}(M, \omega) $ generated by $H$. If $ \{j _{\theta}\}$ is independent
of $\theta $ we just write $CF (p, j) $, or just $CF (p)$ if $ \{j
_{\theta}\}$ is implicit.
\subsection{Coupling forms} \label{sec.prelim.coup}
This material appears in \cite[Chapter
6]{citeMcDuffSalamonIntroductiontosymplectictopology}, in slightly
more generality of locally Hamiltonian fibrations, so we review it here only
briefly. A Hamiltonian fibration is a smooth fiber bundle $$M \hookrightarrow P \to X,$$
with structure group $ \text {Ham}(M, \omega) $.  A \emph{coupling form}  for a Hamiltonian
fibration $M \hookrightarrow P \xrightarrow{p} X$, is a closed 2-form $
\widetilde{\Omega} $ whose restriction to fibers coincides with $\omega $ (with respect to a Hamiltonian trivialization), and
which has the property: 
\begin{equation*}  \int _{M}  \widetilde{\Omega} ^{n+1} =0 \in \Omega ^{2} (X).
\end{equation*}
Such a 2-form determines a Hamiltonian connection $ \mathcal {A} _{
\widetilde{\Omega}} $, by declaring horizontal spaces to be $ \widetilde{\Omega}
$ orthogonal spaces to the vertical tangent spaces. A Hamiltonian connection $
\mathcal {A} $ in turn determines a coupling form $ \widetilde{\Omega} _{
\mathcal {A}} $ as follows. First we ask that $ \widetilde{\Omega} _{ \mathcal
{A}} $ generates the connection $ \mathcal {A} $ as above. This determines $
\widetilde{\Omega} _{ \mathcal {A}} $, up to values on $ \mathcal {A}
$ horizontal lifts $ \widetilde{v},  \widetilde{w} \in T _{p} P $ of $v,w \in T
_{x} X $. We specify these values by the formula $$ \widetilde{\Omega} _{
\mathcal {A}} ( \widetilde{v}, \widetilde{w}) = R _{ \mathcal {A}} (v, w) (p),$$ 
where $R _{ \mathcal {A}}| _{x} $ is the curvature 2-form with values in $C
^{\infty} _{norm} (p ^{-1} (x))$. 
For a connection $\mathcal{A}_{H}$ induced by $H: M \times S ^{1} \to \mathbb{R}$, the associated coupling form $\widetilde{\Omega}_{\mathcal{A} _{H}}$ is given by:
\begin{equation} \label{eq.couplingequality}
	\widetilde{\Omega}_{\mathcal{A} _{H}} =  \widetilde{\Omega} _{H} = \omega - d (Hd \theta).
\end{equation}
In particular for a section $u$ of $M \times \mathbb{C}$ asymptotic to a flat section $o$, the integral of $\widetilde{\Omega}_{\mathcal{A} _{H}}$ over $u$
is the action of $o$ as a periodic orbit of $H$, (with bounding disk determined by $u$). 
\section{The proofs}

\begin{proof} [Proof of Lemma \ref{lemma.defined}] 
Let $ \mathcal {O} \to B ^{k} $, be a fibration with fiber over $b $ the space
of Hamiltonian connections $ \mathcal {A} $ on $M \times \mathbb{C} $, $\delta
$-admissible with respect to $f (b)$. By Proposition \ref{lemma.induced} the
fiber is non-empty, and is contractible (it is a $\delta$-ball in an affine
space), moreover it readily follows by Proposition \ref{lemma.induced} that $
\mathcal {O} $ is a Serre fibration. Consequently the space of sections of $ \mathcal {O} $ is connected by classical
obstruction theory. But this is exactly the space of families $ \{ \mathcal
{A} _{b}\} $, $\delta $-admissible with respect to $f$. 

Suppose we are given a one parameter family $ \{ \mathcal
{A} _{b,t}\} $, $0 \leq t \leq 1$, with each $ \mathcal {A} _{b,t} $ $\delta
$-admissible with respect to $f _{t} (b) $, $[f_t] = [f] $. To show that the
invariant $\Psi _{\gamma} ( [f]) $ is well defined, we need to show that a regular one parameter family induces
a one-dimensional compact cobordism 
$$ \overline{ \mathcal {M}} ( \{J
^{reg, \mathcal {A}} _{b, t, f_t }\}),$$ between 
$ \overline{ \mathcal {M}} ( \{J
^{reg, \mathcal {A}} _{b, 0, f_0 }\}),$ $\overline{
\mathcal {M}} ( \{J ^{reg, \mathcal {A}} _{b, 1, f_1 }\})$. We should elaborate on what regular means.  First the
chain complexes $CF (p ( \mathcal {A} _{b,t}, \{j _{b, t, r,\theta} \} _{r,
\theta} ) $, are meant to be regular for each $b,t$ but this follows by
construction of $\{j _{b, t, r,\theta} \} $ (analogous to construction of $ \{j
_{b,r, \theta}\},$ see  \eqref{eq.jbr}) assuming $j$ was taken to be
regular. Next, denote by ${\mathcal {B}}$ the space of triples $(u, b, t)$, $u
\in
\mathcal {B} _{b}$, $t \in [0,1] $ with $\mathcal {B}
_{b}$ denoting the space of class $[\sigma _{\max}]$-smooth sections of $M
\times \mathbb{C}$, asymptotic to $o _{\max, b} $. This is a Frechet
bundle over $B ^{k}$, the charts can be constructed using the diffeomorphisms $\{\psi _{b}
\} $, (see \eqref{eq.psib}). After appropriate Sobolev completions which we
don't specify (as this is classical), we get a Banach bundle
\begin{equation*} {\mathcal {E}} \to {\mathcal {B}},
\end{equation*}
whose fiber over $u _{b}=(u,b,t)$ is $\Omega ^{0,1} (S ^{2}, u ^{*} T ^{vert}(M
\times \mathbb{C} ))$, and the section we call $ \mathcal {F} _{f}$, 
\begin{equation*} \mathcal {F} _{f} (u _{b})= \delbar _{J  ^{ \mathcal
{A}}_{b,t}} (u).
\end{equation*}
The space ${ \mathcal {M}} ( \{J
^{\mathcal {A}} _{b, t, f_t }\}) $ is identified with  the 0-locus of this
section. As $ \mathcal {B}$ fibers over $B \times [0,1] $, the so called vertical differential at $u \in \mathcal {M} (J
^{\mathcal {A}} _{b, t, f_t }) $, is a real linear Cauchy-Riemann operator
of the form $$D_{u}: \Omega ^{0} ( u   ^{*} T ^{vert} (M \times
\mathbb{C})) \oplus T(B \times [0,1])  \to \Omega ^{0,1} (u   ^{*} T
^{vert} (M \times \mathbb{C})),$$
and is Fredholm of index 
\begin{equation*} CZ (o _{\max}) -CZ ( [M]) + \dim B +1 = 1, 
\end{equation*}
by assumption that $\dim B = \ind \gamma =  -(CZ (o _{\max}) -CZ ( [M])) $.
We say $u$ is regular if this operator is surjective. For a general $u \in
\overline {\mathcal {M}} (J ^{\mathcal {A}} _{b, t, f_t }),$ we will say that it
is regular if the analogue of the operator above is surjective for $u _{princ}
$:  the principal component of the ``section'' $u$, i.e. the component of
the holomorphic building not entirely contained in the translation invariant part of
$(M \times \mathbb{C}, \{ \mathcal {A} _{b,t}\}) $, and which is not a vertical
bubble. We say $ \{J
^{\mathcal {A}} _{b, t, f_t } \}$  is regular if all the elements of the
corresponding compactified moduli space are regular. 
This latter regularity can be obtained, by pertubing the family of connections $
\{ \mathcal {A} _{b,t}\}$,  \cite[Section
8]{citeMcDuffSalamon$J$--holomorphiccurvesandsymplectictopology}
 because the monotonicity assumption rules out holomorphic bubbles in 
 the fiber with negative Chern number.  Under above regularity vertical bubbling
 cannot happen. This is because a vertical bubble in class $A $ drops the Fredholm index of the CR
 operator at the principal component by $2 \langle c_1 TM, A \rangle \geq 2$,
 (monotonicity assumption)
 which would make the Fredholm index at any such principal component negative.
To show that we have such a cobordism (between $ \overline{ \mathcal {M}} ( \{J
^{reg, \mathcal {A}} _{b, 0, f_0 }\}),$ $\overline{
\mathcal {M}} ( \{J ^{reg, \mathcal {A}} _{b, 1, f_1 }\})$) we need two things. 
First that there is an $ \epsilon>0 $, s.t. there are no $ (u,b,t)  \in  
\overline{ \mathcal {M}} ( \{J ^{reg, \mathcal {A}} _{b, t, f_t }\}) $ with $b$ in the $\epsilon $-neighborhood
 of $ \partial B ^{k} $, denoted by $ \partial B ^{k} _{\epsilon}$. Take
 $\epsilon $ so that $f (\partial B ^{k} _{\epsilon}) \subset \mathcal {P}
 _{\phi, E _{\gamma} + \delta} - \mathcal {P} _{\phi, E _{\gamma}}$. By remark following \eqref{eq.couplingequality}, we have $[\widetilde{\Omega} _{ \mathcal
{A}, b,t}] ([u])= - L ^{+} (\gamma)$. 
 Then for $(u,b,t) \in \overline{ \mathcal {M}} ( \{J ^{reg, \mathcal {A}} _{b, t, f_t }\}),$ $b \in \partial B ^{k} _{\epsilon} $ we have:
\begin{equation} \label{eq.calculation} 0 \leq \langle[\widetilde{\Omega} _{ \mathcal
{A}, b,t} + \pi ^{*} (\alpha _{ \mathcal {A}, b,t})], [u] \rangle = - L ^{+} (\gamma) + \area ( \mathcal
{A} _{b,t}) < - L ^{+} (\gamma)  + E _{\gamma} + \delta + \delta <0, 
\end{equation}
for $\alpha _{ \mathcal {A}, b,t}$, as in \eqref{eq.alphaA}, but this is
impossible. 

 Next we need to show that the signed count of  boundary points
 of the manifold $$ \overline{ \mathcal {M}} ( \{J
^{reg, \mathcal {A}} _{b, t, f_t }\}) ,$$ corresponding to broken holomorphic
sections is 0. As $ \{J
^{reg, \mathcal {A}} _{b, t, f_t } \}$ is by assumption regular, a broken
holomorphic section $ (u,b,t) $ will have a pair of components (levels of the
holomorphic building): a principal component  $ (u_1, b,t)$ with $u _{1} $ a $J
^{reg, \mathcal {A}} _{b, t, f_t} $ holomorphic section of $M \times \mathbb{C}
$, asymptotic to a generator $g$ of $CF (p (\mathcal {A} _{b,t})) $ with
Conley-Zehnder index $CZ (o _{\max}) +1$, and a component corresponding to a
Floer gradient trajectory from $g $ to $o _{\max} $.   Regularity rules out
all other non-compactness possibilities by dimension counting. 

 But in this case either the signed count of flow lines
from $g $ to $o _{\max} $ is zero, which would be what we want or 
 $o _{\max} $ is not semi homologically
essential, which contradicts our hypothesis. 
\end {proof}

\begin{proof} [Proof of Proposition \ref{lemma.induced}] For a given  
$f:  (B^k, \partial B^k) \to (\mathcal {P} _{\phi}, \mathcal {P}
_{\phi,{E ^{\gamma}}}) $ and $b \in B ^{k} $, the construction of $ \mathcal
{A} _{b,f}$ is as follows. We first obtain a coupling form $\widetilde{\Omega} _{b,f} $
 on the trivial fibration $$M \times \mathbb{C} \to
\mathbb{C}.$$ This is a form with support in $\{r \geq 1\}
\subset \mathbb{C} $, such that under a fixed identification $$\{r \geq 1\}
\subset \mathbb{C}  \simeq \mathbb{R} ^{\geq 1} \times S ^{1},$$ 
it has the form
\begin{equation} \label{eq.Omegaf} \widetilde{\Omega} _{b,f}= \omega - d (\eta
H _{b} d \theta), 
\end{equation}
where $0 \leq r < \infty,  0 \leq \theta \leq 1 $, (recall that we are using modified polar coordinates) $H _{b} $ is the normalized generating function
for $ f (b) $, and
$\eta: \mathbb{R} \to [0,1]$ is a smooth, monotonely increasing function, with support in $ [1, \infty]
$ satisfying 
\begin{equation} \label{eq.eta} \eta (r) = \begin{cases} 1 & \text{if } 2
-\kappa \leq r < \infty,\\ r-1  & \text{if } 1+ \kappa \leq r \leq 2-2\kappa,
\end{cases}
\end{equation}
  for a small $\kappa >0$. The Hamiltonian connection $ \mathcal
{A} _{b,f} $ is then defined by taking the horizontal spaces for $ \mathcal {A}
_{b,f} $ to be the $ \widetilde{\Omega} _{b} $ orthogonal spaces to the vertical
tangent spaces. 

If $f _{\gamma} $ is a local unstable manifold at $\gamma $, then for  $$ (u,b)
\in \overline{ \mathcal {M}} ( \{ J ^{\mathcal {A}} _{b, f _{\gamma} } \}), $$
for $b \neq 0$, $L ^{+}(f (b)) < L ^{+} (\gamma) $ and so 
\begin{equation*} 0 \leq \langle [ \widetilde{\Omega} _{b,f} + \pi ^{*} \alpha
_{ \mathcal {A} _{b,f}}],  [u] \rangle = - L ^{+} (\gamma) + L ^{+} (f (b)),
\end{equation*}
with the calculation $\pi ^{*} \alpha _{
\mathcal {A} _{b,f}} ( [u]) = L ^{+} (f (b))$, being elementary from
definitions, but this is impossible and so $b=0 $. We need to check that the
section $\sigma _{\max} $, is the only element of $ \overline{ \mathcal {M}} ( J
^{\mathcal {A}} _{0,f _{\gamma}}) $. It is simple to check that it is the only
smooth element, for given another smooth $u \in  \overline{ \mathcal {M}} ( J
^{\mathcal {A}} _{0,f _{\gamma}})  $ we have
\begin{multline} \label{eq.vanishing} 0=  \langle [\omega - \eta (r) dH_0
\wedge d \theta - H_0 d \eta \wedge d \theta + (max _{x \in M} H _0 (x)) d \eta
\wedge d \theta], [u] \rangle.
\end{multline}
Note that $u$ is necessarily horizontal, for otherwise
right hand side is positive by \eqref{eq.almostcomp}. Hence the
form $\omega - \eta (r) d H_0 \wedge d \theta$ must vanish on $u$, as
the horizontal subspaces are spanned by vectors $ \frac{\partial}{\partial r}, \frac{\partial}{\partial
\theta} + \eta (r)X _{H_0}$. Thus,
\eqref{eq.vanishing} can only happen if $u$ is $\sigma _{{\max}}$. 
Upon a moment of reflection we see that the same argument works for a general $u
\in \overline{ \mathcal {M}} ( J
^{\mathcal {A}} _{0,f _{\gamma}})  $. 

\end{proof} 
\begin{proof} [Proof of Theorem \ref{thm.robustustilovsky}]

By the assumption that $f _{\gamma} $ is Morse
at $\gamma$, and by \ref{lemma.induced}, $$ \overline{ \mathcal {M}} (
\{J ^{ \mathcal {A}} _{f _{\gamma}  ,b}\})$$ is a zero dimensional manifold 
consisting of a single point $\sigma _{\max}$. This is the expected dimension as 
the Fredholm index of 
$$D _{\sigma_{\max}}: \Omega ^{0} ( \sigma _{\max} ^{*} T
^{vert} (M \times \mathbb{C})) \oplus T_0B \to \Omega ^{0,1} (\sigma
_{\max} ^{*} T ^{vert} (M \times \mathbb{C})),$$
is 
\begin{equation*}  CZ (o _{\max})
- CZ ( [M]) + k = CZ (o _{\max}) - CZ ( [M])+\ind \gamma = 0,
\end{equation*}
 by Theorem \ref{thm.indexUst}. The restricted operator
$$D ^{rest}_{\sigma_{\max}}: \Omega ^{0} ( \sigma _{\max} ^{*} T
^{vert} (M \times \mathbb{C}))  \to \Omega ^{0,1} (\sigma
_{\max} ^{*} T ^{vert} (M \times \mathbb{C})),$$ has no kernel by assumption, 
and so the dimension of its cokernel is $$-(CZ (o _{\max})
- CZ ( [M])) = k.$$  
The point of the following construction is to perturb the family $\{\mathcal{A}
_{b,f _{\gamma} } \}$  so that this cokernel is covered by the $T_0B $ component of the
total vertical differential.

Let $ \mathcal {H} $ denote the space of coupling forms of the form $
\widetilde{\Omega} _{\mathcal {A} _{p}} + \Pi $, where $ \mathcal {A} _{p} $ is the Hamiltonian connection
on $M \times \mathbb{C} $, induced by $p \in \mathcal {P} _{\phi} $ as in Proposition \ref{lemma.induced},
and $\Pi $ is of the form:
\begin{equation*} \widetilde{\Omega} _{p}  + d (G 
  d\theta),
\end{equation*}
$G  $  is a normalized function with  support in $ \{1 + \kappa
\leq r \leq 2-2 \kappa \} \subset \mathbb{C} $, ($\kappa $ as in the proof of
Proposition \ref{lemma.induced}). Such that identifying $ \{r \geq 1\} \subset \mathbb{C} $
with $ \mathbb{R} ^{\geq 1} \times S ^{1}$, $G$  has the form: 
\begin{equation} \label{eq.gb} G: M
\times \mathbb{R} ^{\geq 1} \times S ^{1} \to \mathbb{R}, \quad G _{r} =
 G | _{(M \times \{r\}) \times S ^{1}} = \zeta (r) \cdot K,
\end{equation}
where $K: M \times S ^{1} \to \mathbb{R} $, such that $K (x
_{\max})=0$  and $ \zeta: \mathbb{R} ^{\geq 0}
\to \mathbb{R}$ is a function with compact support. (Here $x _{\max} $ is the
extremizer of the generating function of $\gamma $ as before.) To emphasize we are not fixing $p$,$K$, $\zeta$.   
Let $\mathcal{C} _{\mathcal{H}}$ denote the associated space of Hamiltonian connections. 

%
\begin{lemma} \label{lemma.crit} Let $ \{ \mathcal {A} _{b}\}$, $ \mathcal {A}
_{b} \in \mathcal{C}_{\mathcal {H}} $, $ \mathcal {A} _{0} = \mathcal {A} _{\gamma}$ be a family of Hamiltonian
connections on $M \times \mathbb{C} $. Then we have $$ d \area _{ \{ \mathcal
{A} _{b}\}} (0) = 0,$$ where
$$\area _{ \{ \mathcal {A} _{b}\}} (b) = \area ( \mathcal {A} _{b}).$$ 
\end{lemma}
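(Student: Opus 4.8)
The plan is to compute the first variation of the function $b \mapsto \area(\mathcal{A}_b)$ directly from the formula \eqref{eq.alphaA}, $\alpha_{\mathcal{A}}(v,w) = \max_M R_{\mathcal{A}}(v,w)$, together with the observation that $\area(\mathcal{A}_b) = \int_{\mathbb{C}} \alpha_{\mathcal{A}_b}$. First I would note that, by the definition of $\mathcal{C}_{\mathcal{H}}$, every connection $\mathcal{A}_b$ in the family has curvature $2$-form whose fiberwise-maximum is attained at the flat section $\sigma_{\max}$: indeed the coupling forms are of the form $\widetilde{\Omega}_{\mathcal{A}_p} + d(G\,d\theta)$ with $G_r = \zeta(r)\cdot K$, $K(x_{\max}) = 0$, so the perturbation $G$ does not move the maximizer of the generating Hamiltonian away from $x_{\max}$ (the curvature is, in the relevant flat region, $d$ of the generating function up to the $\zeta$-cutoff, and adding a function vanishing at $x_{\max}$ keeps $x_{\max}$ the maximizer, for the unperturbed $\gamma$ which has a unique nondegenerate maximum and hence so do small perturbations). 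Consequently $\area(\mathcal{A}_b) = \int_{\mathbb{C}} R_{\mathcal{A}_b}(\tfrac{\partial}{\partial r}, \tfrac{\partial}{\partial\theta})(x_{\max})\, dr\wedge d\theta$ — the maximum is "read off" at the single point $x_{\max}$, which varies smoothly (in fact is constant) with $b$.

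Next I would differentiate under the integral sign at $b = 0$. Write $\mathcal{A}_b$ in terms of a generating function $\mathcal{H}_b$ on $M \times \mathbb{R}^{\geq 1} \times S^1$ (the normalized generating function of $f(b)$ cut off by $\eta$, plus the perturbation term $\zeta(r)K_b$); then the relevant curvature component is, pointwise at $x_{\max}$, essentially $\partial_r(\text{coefficient}) = \eta'(r)\,\mathcal{H}_b(x_{\max},\theta) + \zeta'(r)\, K_b(x_{\max},\theta) + (\text{terms involving } dH_b \text{ which vanish at the critical point } x_{\max})$. Since $K_b(x_{\max}) = 0$ for all $b$ by construction, the $\zeta'$ term contributes nothing and its $b$-derivative contributes nothing. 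The $\eta'$ term integrates, over $r$, against $\mathcal{H}_b(x_{\max},\theta) = \max_M H^{f(b)}_\theta$, and integrating over $\theta$ reproduces $L^+(f(b))$ (using $\int_1^\infty \eta'(r)\,dr = 1$). Thus $\area(\mathcal{A}_b) = L^+(f(b)) + (\text{contribution from the } 1+\kappa \leq r \leq 2 - 2\kappa \text{ interpolation that also reduces to } L^+\text{, by the same bookkeeping as in the proof of \ref{lemma.induced}})$, i.e. $\area(\mathcal{A}_b) = L^+(f(b))$ on the nose for this family. Then $d\area_{\{\mathcal{A}_b\}}(0) = d(L^+\circ f)(0) = 0$ because $f = f_\gamma$ is Morse with a critical point (the maximum) at $0 \in B^k$, which is exactly the defining property of a local unstable manifold (Definition \ref{def.unstable}).

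Alternatively — and this is perhaps cleaner — I would avoid re-deriving $\area = L^+\circ f$ and instead argue: the constraint "$\widetilde{\Omega}_{\mathcal{A}_b} + \pi^*\alpha$ nearly symplectic" together with the already-established identity $\langle [\widetilde{\Omega}_{\mathcal{A}_b} + \pi^*\alpha_{\mathcal{A}_b}],[\sigma_{\max}]\rangle = 0$ (from the curvature being extremized at $x_{\max}$, i.e. $\sigma_{\max}$ is horizontal and the $\alpha$-correction exactly cancels, as in \eqref{eq.vanishing}) forces $\langle[\widetilde{\Omega}_{\mathcal{A}_b}],[\sigma_{\max}]\rangle = -\int_{\mathbb{C}}\alpha_{\mathcal{A}_b} = -\area(\mathcal{A}_b)$; and the left side is computed topologically/via the remark after \eqref{eq.couplingequality} to be $-L^+(f(b))$. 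Either way, the main obstacle is purely bookkeeping: carefully justifying the differentiation-under-the-integral and, more importantly, confirming that for the perturbed connections in $\mathcal{C}_{\mathcal{H}}$ the fiberwise maximum of the curvature is still attained at $x_{\max}$ (so that $\area$ is a smooth function of $b$ with the clean formula above, rather than a max over a moving point — a priori $\max_M R_{\mathcal{A}_b}$ need only be Lipschitz in $b$). Once that is in hand the derivative vanishes for the trivial reason that $0$ is the Morse maximum of $L^+\circ f_\gamma$.
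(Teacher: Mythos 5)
Your overall strategy---differentiate the explicit formula $\area(\mathcal{A}_b)=\int_{\mathbb{C}}\alpha_{\mathcal{A}_b}$ with $\alpha_{\mathcal{A}_b}$ given by the fiberwise maximum of the curvature, then invoke criticality---is the right one, but the pivotal step is not justified and is false in general. You claim that for every $b$ the fiberwise maximum is still attained at the fixed point $x_{\max}$, because the perturbation $G=\zeta(r)K$ satisfies $K(x_{\max})=0$, and you conclude $\area(\mathcal{A}_b)=L^{+}(f(b))$ on the nose. But vanishing of $K$ at $x_{\max}$ only pins the \emph{value} of the perturbation there, not the \emph{location} of the maximum: $dK(x_{\max})$ is unconstrained (indeed the whole point of the space $\mathcal{H}$ in the proof of Theorem \ref{thm.robustustilovsky} is that these perturbations are flexible enough to hit a complement $\mathcal{V}$ of the image of $D^{rest}_{\sigma_{\max}}$), so for $b\neq 0$ the point $x_{\max}$ is in general not even a critical point of the curvature function $\ast R_{\mathcal{A}_b}(r,\theta)$, the maximizer moves off $x_{\max}$, and $\area(\mathcal{A}_b)$ differs from $L^{+}(f(b))$ (at second order in $b$); this is also why Definition \ref{definition.admissible} only asks $|\area(\mathcal{A}_b)-L^{+}(f(b))|\leq\delta$ rather than equality. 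Your ``cleaner'' alternative has the same problem, since it needs $\sigma_{\max}$ to be horizontal and curvature-extremizing for $\mathcal{A}_b$ with $b\neq 0$. So the identity $\area=L^{+}\circ f$ that you differentiate does not hold away from $b=0$, and the computation collapses exactly at the point where the content of the lemma lies.

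The paper instead runs an envelope-theorem argument concentrated at $b=0$: since $\ast R_{\mathcal{A}_0}(r,\theta)=H^{\gamma}_{\theta}$ (on the relevant annulus) is Morse at its unique maximizer, the maximizer $x_{\max,\tau,r,\theta}$ of $\ast R_{\mathcal{A}_{b(\tau)}}$ is unique and varies smoothly for small $\tau$ (this also disposes of the Lipschitz worry you raise at the end), and the chain rule splits $\area'(0)$ into three terms: the derivative of $L^{+}$ along the holonomy paths, which vanishes because $\gamma$ is a critical point of $L^{+}$ (Ustilovsky)---note the lemma concerns an arbitrary family in $\mathcal{C}_{\mathcal{H}}$ through $\mathcal{A}_{\gamma}$, so you cannot lean on $f_{\gamma}$ being a local unstable manifold as you do; the term $\int_{\mathbb{C}} dH^{\gamma}_{\theta}\bigl(\tfrac{d}{d\tau}\big|_{\tau=0}x_{\max,\tau,r,\theta}\bigr)$ coming from the motion of the maximizer, which vanishes because $x_{\max}$ is critical for each $H^{\gamma}_{\theta}$; and the term $\int_{\mathbb{C}}\tfrac{d}{d\tau}\big|_{\tau=0}K_{b(\tau)}(x_{\max})$, which vanishes because $K_{b}(x_{\max})=0$ identically in $b$. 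Your proposal is missing precisely the middle term---the first-order effect of the moving maximizer---which you implicitly set to zero via the incorrect claim that the maximizer does not move.
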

\begin{proof}  Suppose we have a variation $ \mathcal {A} _{b ( \tau)}$, with $ b (\tau) $, $ -1 \leq
\tau \leq 1 $ a smooth path through $0 \in B ^{k} $, $b (0)=0 $. Denote by $ x _{\max, \tau, r, \theta} $ the maximizer of $\ast R _{ \mathcal {A} _{b (\tau) }} (r, \theta) $, with the latter denoting the Hodge star (evaluate curvature on an
orthonormal pair) of the curvature 2-form of $ \mathcal {A} _{b (\tau)} $, at
$r, \theta$ (identifying the lie algebra with $C ^{\infty} _{norm} (M)$). Here the Hodge star is taken with respect to a metric
$g _{ \mathbb{C}} $ on $
\mathbb{C} $, for which the identification $\{r \geq 1\}
\subset \mathbb{C}  \simeq \mathbb{R} ^{r \geq
1} \times S ^{1}$ is an isometry, for the classical metric $g _{st}$ on the
latter. (I.e. $
\mathcal {A} _{b (\tau)} = \ast R _{ \mathcal {A} _{b (\tau) }} \otimes _{ \mathbb{R}} \omega _{st}$ for $\omega
_{st} $ the classical volume form on $ \mathbb{R} \times S ^{1} $, thinking of $\ast R _{ \mathcal {A} _{b (\tau) }}$ as a lie algebra valued function.)

 As $
\ast R _{ \mathcal {A} _{b (0)}} (r,\theta) =  H ^{\gamma} _{
\theta}$, for $\{1 + \kappa
\leq r \leq 2-2 \kappa \} $ is Morse at $x _{\max} $ the point $ x _{\max, \tau,
r, \theta} $ is uniquely determined and varies smoothly with $\tau $ for $\tau $ small. 

The derivative at $ \tau=0 $ of $\area (\tau) = \area ( \mathcal {A}
_{b (\tau)})$, is 
\begin {equation}
\begin{split} \area' (0) = \frac{d}{d \tau} \big| _{\tau =0}  L ^{+} (b (\tau))
+ \int _{ \mathbb{C}} dH ^{\gamma} _{x, \theta} (
\frac{d}{d \tau} \big | _{\tau =0} x _{\max, \tau, r, \theta}) \, dvol _{g _{
\mathbb{C}}} + \\ \int _{ \mathbb{C}} \frac{d}{d \tau} \big | _{\tau=0} K _{
\mathcal {A} _{b (\tau)}, r, \theta} (x _{\max})  \,dvol _{g _{ \mathbb{C}}}.
\end{split}
\end {equation}
For clarity we mention that the above expansion of the derivative comes from the chain rule for the composition 
\begin{equation*}
	\mathbb{R} \to \mathbb{R}^{3} \to C ^{\infty} (\mathbb{C}, \mathbb{R})
        \xrightarrow{\int _{\mathbb{C}}} 
           \mathbb{R},
\end{equation*}
with the obvious maps not indicated. 
 The first term vanishes as $b (0) = \gamma $ is
critical for $L ^{+} $ by assumption. The second term vanishes as $x _{\max} $
is critical for $H ^{\gamma} _{\theta}$ for each $\theta $ by assumption. The
last term vanishes by assumption that $K (x _{\max})=0 $ in \eqref{eq.gb}.
\end{proof}
Each element  in $ \mathcal {H} $ (or $\mathcal{C}_{H}$) determines  an almost complex structure on $M \times
\mathbb{C} $ as before, so we have the universal differential 
$$D ^{univ} _{\sigma _{\max}, \mathcal {A}_\gamma}:\Omega ^{0} ( \sigma _{\max} ^{*} T
^{vert} (M \times \mathbb{C})) \oplus T _{ \mathcal {A} _{\gamma}}\mathcal {H} \to  \Omega
^{0,1} (\sigma _{\max} ^{*} T ^{vert} (M \times \mathbb{C})),$$
which by the proof of \cite[Theorem
8.3.1]{citeMcDuffSalamon$J$--holomorphiccurvesandsymplectictopology} and\cite[Remark
3.2.3]{citeMcDuffSalamon$J$--holomorphiccurvesandsymplectictopology} is surjective, (we are dropping 
all mentions of Sobolev completions as this is standard.)

Let $ \mathcal {V}$ denote a fixed complement to the image $D _{\sigma_{\max}}$,
$ 0 \leq \dim \mathcal {V} \leq k $.  By the above we may find a subspace $
\mathcal {S} \subset \mathcal {H} $, $0 \leq \dim \mathcal {S} \leq k $ so that
$$ D ^{univ} _{\sigma _{\max}, \mathcal {A}_\gamma}  (0 \oplus \mathcal {S}) =
\mathcal {V}.$$ We now take $ \{ \mathcal {A} _{b}\}$,  $ \mathcal {A} _{b} \in
\mathcal {H} $, $ \mathcal {A} _{0} = \mathcal {A} _{\gamma}$, such that this is
a family  $\delta $-admissible with respect to $f$, such that the natural
differential $$T_0 B \to T _{ \mathcal {A} _{\gamma}} \mathcal {H} $$ is onto $
\mathcal {S} $, and such that $ \{ \mathcal {A} _{b}\} $ is sufficiently $C
^{\infty} $ close to $ \{ \mathcal {A} _{b,f}\} $ so that the function $\area _{
\{ \mathcal {A} _{b}\}}: B ^{k} \to \mathbb{R}$ is still Morse at  the unique
maximum $b=0 $.  This last condition is possible by Lemma \ref{lemma.crit} and
the following elementary observation. 

\begin{lemma} Suppose $f: B ^{k} \to
\mathbb{R} $ is a function Morse at the unique maximizer $0 \in B ^{k}$, and
$f'$ a smooth function sufficiently $C ^{\infty} $ close to $f$, and such that
$0$ is a critical point of $f' $. Then $0$ is also a unique maximizer of $f' $,
and moreover $f' $ is Morse at $0$. 
\end{lemma}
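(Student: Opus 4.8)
The plan is to separate a local argument near $0$ from a global argument on the rest of the ball, using compactness of $B^k$ (recall $B^k$ is the closed standard $k$-ball, hence compact).

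First I would analyze the behavior near $0$. Since $f$ is Morse at $0$ and $0$ is a maximizer, the Hessian $D^2 f(0)$ is negative definite; by continuity of $x \mapsto D^2 f(x)$ there is a closed ball $U = \overline{B_\rho(0)} \subset B^k$ and a constant $c > 0$ such that every eigenvalue of $D^2 f(x)$ is $\le -2c$ for all $x \in U$. This $U$ and $c$ depend on $f$ only. Now if $f'$ is $C^\infty$-close to $f$ — in particular $C^2$-close with $\|f' - f\|_{C^2(U)} < c$ — then every eigenvalue of $D^2 f'(x)$ is $\le -c < 0$ on $U$, so $f'|_U$ is strictly concave on the convex set $U$. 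A strictly concave function on a convex set has at most one critical point, and any such point is its strict maximizer over $U$; since $0$ is assumed to be a critical point of $f'$, it is the unique maximizer of $f'|_U$, and $D^2 f'(0)$ is nondegenerate, i.e. $f'$ is Morse at $0$.

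Next I would rule out maximizers of $f'$ away from $0$. The set $K := B^k \setminus \mathrm{int}\,U$ is compact and does not contain $0$, so because $0$ is the \emph{unique} maximizer of $f$ we have $m := \max_{x \in K} f(x) < f(0)$. Put $\varepsilon := (f(0) - m)/2 > 0$. If $f'$ additionally satisfies $\|f' - f\|_{C^0} < \varepsilon$, then for every $x \in K$ we get $f'(x) \le f(x) + \varepsilon \le m + \varepsilon < f(0) - \varepsilon \le f'(0)$, so no point of $K$ maximizes $f'$ over $B^k$.

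Finally I would combine the two steps: $f'$ is continuous on the compact set $B^k$, hence attains a maximum; by the third paragraph that maximizer lies in $U$; by the second paragraph the only candidate in $U$ is $0$, which is moreover a nondegenerate critical point. Hence $0$ is the unique maximizer of $f'$ and $f'$ is Morse at $0$. There is no genuine obstacle — the statement is elementary — but the one point requiring care is the order of quantifiers: the neighborhood $U$ and the constant $c$ must be fixed in terms of $f$ alone, and only then is the required degree of $C^\infty$-closeness of $f'$ determined (namely $C^2$-smallness on $U$ below the threshold $c$, together with $C^0$-smallness on $B^k$ below the threshold $(f(0)-m)/2$).
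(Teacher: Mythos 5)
Your argument is correct and complete. For comparison: the paper does not actually spell out a proof here --- it simply says the statement ``follows by the Morse Lemma'' and omits the details --- so your write-up is, if anything, more self-contained than the original. Your route avoids the Morse Lemma altogether: instead of normalizing $f$ to a negative definite quadratic form near $0$ (which would then require tracking how the normal form and its domain behave under perturbation), you use the two stable ingredients directly, namely negative definiteness of $D^2f$ on a fixed closed ball $U$ around $0$ (preserved under $C^2$-small perturbation, giving strict concavity, hence a unique critical point which is the strict maximizer of $f'|_U$ and at which $f'$ is Morse) and the positive gap $f(0)-\max_K f$ on the compact complement $K=B^k\setminus \mathrm{int}\,U$ (preserved under $C^0$-small perturbation). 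This is exactly the right way to handle the order of quantifiers you flag: $U$, $c$ and the gap are fixed by $f$ alone, and only then is the closeness threshold for $f'$ declared; the hypothesis that $0$ is critical for $f'$ is what pins the surviving maximizer at $0$ rather than merely near it. Two cosmetic points: $C^2$-smallness of $f'-f$ controls the operator norm of the Hessian difference only up to a dimensional constant, so the threshold should be $c$ divided by such a constant (or you should phrase the bound directly in terms of the Hessian entries); and with $\varepsilon=(f(0)-m)/2$ one has $m+\varepsilon=f(0)-\varepsilon$ exactly, so the strict inequality in your chain must be carried by $\|f'-f\|_{C^0}<\varepsilon$ at the endpoints, i.e. $f'(x)\le m+\varepsilon=f(0)-\varepsilon<f'(0)$ for $x\in K$; the conclusion is unaffected.
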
 
\begin{proof} This follows by
Morse Lemma the proof is omitted.
 \end{proof} By the proof of Proposition
\ref{lemma.induced}, $ (\sigma _{\max}, 0)$ is the only element of $\overline{
\mathcal {M}} (\{J  ^{ \mathcal {A}}_{b}\})  $, and so this moduli space is
regular and the Gromov-Witten invariant is $ \pm 1 $. \end {proof} \begin{proof}
[Proof of Theorem \ref{thm.minimizing}]  Clearly $|f _{\gamma}| = L ^{+}
(\gamma)$. On the other hand if there is an $f' \in [f _{\gamma}] $, with $|f'|
< L ^{+} (\gamma)$, then by the proof of Lemma \ref{lemma.defined} the moduli
space $ \overline{ \mathcal {M}} ( \{J ^{ \mathcal {A}} _{f',b}\}) $ is empty
but this contradicts \ref{thm.robustustilovsky}. \end{proof} \begin{proof}
[Proof of theorem \ref{thm.virtual}] Given our robust Ustilovsky geodesic
$\gamma $, and $H ^{\gamma} _{\theta} $ its generating Hamiltonian, let $H': M
\times S ^{1} \to \mathbb{R}$ be a function  whose pull-back to  $ \mathbb{C}
^{n} $ by a fixed Darboux chart of $x _{\max} \in M$, coincides with the Hessian
of $H ^{\gamma}$ at $x _{\max} $: $Hess (H ^{\gamma} _{\theta}) (x _{\max})$ as
a function $ \mathbb{C} ^{n} \to \mathbb{R} $, for each $\theta $. Taking this
Darboux neighborhood to be suitably small the resulting $H'  $ can be made
arbitrarily $C ^{\infty} $  close to $H ^{\gamma} _{\theta} $.  Consequently the
resulting path $\gamma'  $ is still a robust Ustilovsky geodesic, (it may have a
different end point).  We may also suppose without loss of generality that the
Darboux chart   $ \mathbb{C} ^{n} \to M $, is   holomorphic with respect to $j $
on $M $. Let $ \mathcal {A}' $ denote the   Hamiltonian connection on $M \times
\mathbb{C} $, induced by $H'$.  It follows that a normal neighborhood of
$\sigma _{\max} \subset (M \times \mathbb{C}, J ^{ \mathcal {A'}})$, is
biholomorphic to a neighborhood of a 0 section of a $ \mathbb{C} ^{n} $ vector
bundle $E $ over $ \mathbb{C} $, whose almost complex structure is induced by
the unitary connection $A'$, determined by $$Hess (H ^{\gamma}) (x _{\max}):
\mathbb{C} ^{n} \times S ^{1} \to \mathbb{R},$$ (exactly as in Proposition
\ref{lemma.induced}). Such an almost complex structure must be integrable
\cite[Section 5]{citeAtiyahBottTheYang-MillsequationsoverRiemannsurfaces.}. Consequently,  the operator \begin{equation*} D
^{rest}_{\sigma_{\max}, \mathcal {A}'}: \Omega ^{0}  ( \sigma _{\max} ^{*} T
^{vert} (M \times \mathbb{C}))  \to \Omega ^{0,1} (\sigma _{\max} ^{*} T ^{vert}
(M \times \mathbb{C})),  \end{equation*} is the Dolbeault operator for the
holomorphic structure on $E $ above. If $\xi \neq 0$ is in the kernel of the
operator above, then since a normal neighborhood of $\sigma _{\max} \subset (M
\times \mathbb{C}, J ^{ \mathcal {A'}})$ is biholomorphic to a neighborhood of
the 0 section of $E $, there would be an element of $ \overline{ \mathcal {M}}
(\{J  ^{ \mathcal {A}'}\})  $, corresponding to $ \epsilon \cdot \xi $,
$\epsilon >0$ small enough,  and such an element would different from $\sigma
_{\max} $, which is impossible. The claim then follows by Theorem
\ref{thm.robustustilovsky}. \end{proof} \begin{proof} [Proof of Theorem
\ref{thm.index0}]  By Theorem \ref{thm.virtual} for any robust Ustilovsky
geodesic $\gamma $ there is an arbitrarily $C ^{\infty} $ close Ustilovsky
geodesic $\gamma'$, (with possibly different right end point) which is
minimizing in its homotopy class relative end points. It immediately follows
that $ \gamma $ itself must be minimizing in its homotopy class  relative end
points.  balkh\end{proof}

  \bibliographystyle{siam}
  \bibliography{/Users/yashasavelyev/GoogleDrive/workspace/link} 
\end{document}